 \newtheorem{thm}{Theorem}[section]
 \newtheorem{cor}[thm]{Corollary}
 \newtheorem{lem}[thm]{Lemma}
 \newtheorem{prop}[thm]{Proposition}
 \newtheorem{quest}[thm]{Question}
 \theoremstyle{definition}
 \newtheorem{defn}[thm]{Definition}
 \theoremstyle{remark}
 \newtheorem{rem}[thm]{Remark}
 \newtheorem{ex}[thm]{Example}
 \numberwithin{equation}{section}
\title[Mergelyan type theorems in several complex variables]{A function algebra providing new Mergelyan type theorems in several complex variables}
\subjclass[2010]{Primary 32A38; Secondary 46G20, 30E10}
\keywords{rational approximation, products of planar compact sets, the algebra $A(K)$, Mergelyan Theorem}
\author[J. Falc\'o]{Javier Falc\'o}
\address[Javier Falc\'o]{Departamento de An\'alisis Matem\'atico, Universidad de Valencia, Doctor Moliner 50, 46100 Burjasot (Valencia),
Spain.}
\email{Francisco.J.Falco@uv.es}
\author[P. M. Gauthier]{Paul M. Gauthier }
\address[Paul M. Gauthier]{D\'epartement de math\'ematiques et de statistique, Universit\'e de Montr\'eal, Montr\'eal, Qu\'ebec, Canada H3C3J7.}
\email{gauthier@dms.umontreal.ca}
\author[M. Manolaki]{Myrto Manolaki}
\address[Myrto Manolaki]{School of Mathematics and Statistics, University College Dublin, Belfield, Dublin 4, Ireland.} \email{arhimidis8@yahoo.gr}
\author[V. Nestoridis]{Vassili Nestoridis}
\address[Vassili Nestoridis]{Department of Mathematics, University of Athens, 157 84 Panepistemiopolis,  Athens, Greece.}
\email{vnestor@math.uoa.gr}
\date{\today}
\thanks{First author was supported by MINECO and FEDER Project MTM2017-83262-C2-1-P. Second author was supported by NSERC (Canada) grant RGPIN-2016-04107.}
\begin{document}
\baselineskip=.55cm

\begin{abstract}
	For compact sets $K\subset \mathbb C^{d}$, we introduce a subalgebra $A_{D}(K)$ of $A(K)$, which allows us to obtain Mergelyan type theorems for products of planar compact sets as well as for graphs of functions.
\end{abstract}

\maketitle

\section{Introduction}
In one complex variable, approximation theory is well developed \cite{Gaier,MR0864364}. In particular, we have the celebrated theorems of Runge and Mergelyan. The present paper deals with approximation in several variables, where the situation is far from being understood \cite{FFW,MR2276419}. 

Runge's Theorem tells us that  every function holomorphic on a neighbourhood of a given planar compact set  $K$ can be approximated uniformly on $K$ by rational functions of one complex variable. The direct analogue of Runge's Theorem fails in several complex variables. However, in Section \ref{approxspecialtype}, we provide a Runge type theorem for approximation on products of planar compact sets by rational functions of special type.

Mergelyan type approximation is much stronger than Runge type approximation in the sense that the functions to be approximated are no longer assumed to be holomorphic on neighbourhoods of $K$. This relates to the ubiquitous algebra $A(K)$ \cite{MR0410387, MR2305474}, which in both one and several complex variables consists of all functions $f:K\to \mathbb C$ continuous on the compact set  $K$ and holomorphic on the interior of $K$, which is denoted by $K^{\circ}$. In particular, if $K^{\circ}=\emptyset$, then $A(K)=C(K)$. In sections \ref{Mergelyanresult} and \ref{Applications} we present stronger approximation results in the spirit of Mergelyan.

A direct consequence of \cite[Corollary 6.4]{MR0249639} gives that every function in $A(K_{1}\times K_{2})$ can be approximated by functions holomorphic on open sets containing $K_{1}\times K_{2}$, provided that $K_{1}$ and $K_{2}$ are planar compact sets and rational functions of one complex variable with poles off $K_{i}$ (equivalently, functions holomorphic on open sets containing $K_i$) are dense in $A(K_{i})$ for $i=1,2$.  This result has already been used in \cite{FalcoNestoridis2017,FalcoNestoridis2018,Gauthier2014} for products of  regular closed planar compact sets  $K_{i}$, that is $\overline{K_{i}^{\circ}}=K_{i}$. In Section \ref{Ac(K)}, we give an example showing that  \cite[Corollary 6.4]{MR0249639} does not always hold. Motivated by this, for compact sets  $K$ in $ \mathbb C^{d}$, we shall introduce an algebra $A_{D}(K)$ for which the previous example is no longer a counterexample. Recalling that a mapping from a planar domain to $\mathbb C^d$ is holomorphic if each coordinate is a complex-valued holomorphic function, we define the 
algebra $A_D(K)$ as the set of all functions $f:K\to \mathbb C$ that are continuous and such that,  for every open disc $D\subset \mathbb C$ and every injective holomorphic mapping $\Phi: D\to K\subset \mathbb C^{d}$, the composition $f\circ \Phi:D\to \mathbb C$ is holomorphic. Thus, $f$ should be holomorphic on every complex manifold contained in $K$ that is immersed in $\mathbb C^{d}$.  This algebra is a uniform Banach algebra endowed with the supremum norm. Since the new algebra $A_{D}(K)$ is contained in $A(K)$, we hope that it will become easier to obtain more Mergelyan type theorems in several complex variables.

In Section \ref{Mergelyanresult} we prove two such theorems. Firstly, we prove that \cite[Corollary 6.4]{MR0249639} holds in full generality if we replace $A(K)$ by  $A_{D}(K)$.  Furthermore, if $K_{i}$ are regular closed planar compact sets for all $i=1,\ldots, d$, then the algebras $A(K_{1}\times\cdots\times K_{d})$ and $A_{D}(K_{1}\times\cdots\times K_{d})$  coincide and consequently \cite{FalcoNestoridis2017,FalcoNestoridis2018,Gauthier2014} are justified. Secondly, we prove that if $K$ are graphs of certain functions, Mergelyan's Theorem holds for $A_{D}(K)$ while it fails for $A(K)$.

In Section \ref{Applications} we consider $K$  to be arbitrary products of planar compact sets, even infinitely many, and also disjoint unions of such products. We provide results on approximation of functions in $A_{D}(K)$ by rational functions of special type.

Finally, in Section \ref{appendix} we provide some examples which clarify that the inclusions among all the algebras we consider are in general strict.

In future papers, we shall explore more general Mergelyan type approximation by imposing more natural restrictions on the class of plausibly approximable functions. Denote the polynomially convex hull of  $K$ by $\widehat K$ and the rationally convex hull of $K$ by $K^{\wedge_{r}}$.  We shall (in these future papers) also define the $\mathcal O(K)$-$hull$ appropriately.  It is natural to restrict the class of  functions  plausibly approximable on $K$ by polynomials to the class of functions on $K$ having an extension in $A_D(\widehat K).$  For approximation by rational functions we replace the polynomial hull of $K$ by its rational hull and for approximation by holomorphic functions on open sets containing $K$ by  the $\mathcal O(K)$-hull of $K$. 

\textbf{Acknowledgement.} We wish to thank T. Hatziafratis and  A. Katavolos for helpful communications. We would also like to thank  E. Zeron for suggesting Example \ref{zeronex}. This work was completed during our stay in 2018 at the Mathematisches Forschungsinstitut Oberwolfach in the framework of the ``Research in Pairs'' program. We thank the institute for the generous hospitality which gave us the opportunity to realize this project.
\section{Approximation by rational functions of special type}
\label{approxspecialtype} 
This paper deals with several classes of functions on compact sets $K$  in $\mathbb C^{d}$. We denote by $P(K)$ the closure of the set of polynomials in $C(K)$.  We  also denote by $r(K)$ the set of all  rational functions with no singular points on $K$ and by  $R(K)$ the closure of $r(K)$ in $C(K)$. Finally let $\mathcal{O}(K)$ be the set of  functions $g$ holomorphic on an open set $V_{g}$ such that $K\subset V_{g}\subset \mathbb C^{d}$ and let $\overline{\mathcal O}(K)$ be the closure of $\mathcal{O}(K)$ in $C(K)$.

We have the following inclusions:
$$		
P(K) \subset R(K) \subset \overline{\mathcal O}(K) \subset A_{D}(K) \subset A(K) \subset C(K).
$$
The object of complex approximation is to determine under which conditions we have equalities among some of these inclusions. The introduction of the class $A_{D}(K)$ is a significant innovation as there are compacta $K,$ for which $A_{D}(K) \not= A(K)$ and replacing $A(K)$ by $A_{D}(K)$ renders correct some incorrect results in the literature (see Section \ref{Ac(K)}). 

Firstly we shall focus our investigation on compacta $K$ which are Cartesian products of planar compact sets   
$$
K = K_{1}\times\cdots\times K_{d},
$$
for which we have information regarding the factors $K_i.$ For such products we can introduce some natural intermediate algebras of functions. 

\begin{defn}
	\label{def:algebras}
	Let $K_{1},\ldots, K_{d}$ be compact subsets of the complex plane $\mathbb C$. For each $i=1,\dots,d$, let $L_{i}\subset \mathbb C\cup\{\infty\}$ be a set containing at least one point from each connected component of $(\mathbb C\cup\{\infty\})\setminus K_{i}$. Also let $K=K_{1}\times\cdots\times K_{d}\subset \mathbb C^{d}$. We denote by $r_{L}(K)$ the set of all complex functions defined on $K$  that can be written as a finite sum of finite products of rational functions  of one variable $z_{i}$ with poles in $L_{i}$, $i=1,\ldots,d$. Let $R_{L}(K)$ be the closure of $r_{L}(K)$ in $C(K)$.  
\end{defn}
 
By abuse of notation  when we write $r_{L}(K)$ or $R_{L}(K)$ we understand that the sets $L_{i}$, $i=1,\ldots,d$, are arbitrary but fixed and they satisfy the above conditions.
 
It follows directly from the definitions that
$$
P(K)\subset R_{L}(K)\subset R(K)\subset\overline{\mathcal O}(K) \subset A_{D}(K)\subset A(K) \subset C(K).
$$
If $K\subset\mathbb C,$ then
\[
	R_{L}(K)= R(K)=\overline{\mathcal O}(K)
\]
by Runge's Theorem. Our main result in this section is that these equalities also hold if $K$ is a product of planar compact sets.

\begin{thm}
	\label{algebrasequivalences}
	For  $K=K_{1}\times \cdots \times K_{d}$ a product of planar compact sets and $L_{i},$ $i=1,\ldots , d,$ as in Definition \ref{def:algebras}, we have 
	\[
		R_{L}(K)= R(K)= \overline{\mathcal O}(K).
	\]
\end{thm}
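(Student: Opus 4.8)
The plan is to establish the only inclusion that is not already recorded in the excerpt, namely $\overline{\mathcal O}(K)\subset R_L(K)$; together with $R_L(K)\subset R(K)\subset\overline{\mathcal O}(K)$ this gives the three asserted equalities. Since $R_L(K)$ is by definition closed in $C(K)$ and $\overline{\mathcal O}(K)$ is the closure of $\mathcal O(K)$, it suffices to show $\mathcal O(K)\subset R_L(K)$, i.e. that every $g$ holomorphic on some open set $V\supset K$ can be approximated uniformly on $K$ by elements of $r_L(K)$. The first step is a finitely iterated tube lemma: since $K=K_1\times\cdots\times K_d$ is compact and $V$ open, there are open sets $U_i\supset K_i$ in $\mathbb C$ with $U_1\times\cdots\times U_d\subset V$; so we may assume from the start that $g$ is holomorphic on a product neighbourhood of $K$.

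I would then argue by induction on $d$. For $d=1$ the set $r_L(K_1)$ is exactly the set of rational functions of one variable with poles in $L_1$, and $R_L(K_1)=R(K_1)=\overline{\mathcal O}(K_1)$ is precisely Runge's theorem (using that $L_1$ meets every component of $(\mathbb C\cup\{\infty\})\setminus K_1$). For the inductive step write $K=K_1\times K'$ with $K'=K_2\times\cdots\times K_d\subset\mathbb C^{d-1}$, and suppose $g$ is holomorphic on $U_1\times V'$ with $V'\supset K'$ open. Using the classical cycle construction from the proof of Runge's theorem, fix a cycle $\Gamma$ contained in $U_1\setminus K_1$ whose winding number about every point of $K_1$ equals $1$ (this $\Gamma$ depends only on the pair $(K_1,U_1)$). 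Then, for every fixed $w\in V'$ the function $\zeta\mapsto g(\zeta,w)$ is holomorphic on $U_1$, and the Cauchy representation
\[
	g(z_1,w)=\frac{1}{2\pi i}\int_{\Gamma}\frac{g(\zeta,w)}{\zeta-z_1}\,d\zeta
\]
holds for all $z_1$ in a neighbourhood of $K_1$. The integrand is continuous, hence uniformly continuous, on the compact set $\Gamma\times K_1\times K'$ (note $\operatorname{dist}(\Gamma,K_1)>0$), so $g$ is the uniform limit on $K$ of Riemann sums of the form $\sum_{k} c_k\,\dfrac{g(\zeta_k,w)}{\zeta_k-z_1}$ with $c_k\in\mathbb C$ and $\zeta_k\in\Gamma\subset U_1\setminus K_1$.

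It then remains to approximate one such finite sum by an element of $r_L(K)$. For each $k$, the map $z_1\mapsto(\zeta_k-z_1)^{-1}$ is a rational function of one variable with its only pole at $\zeta_k\notin K_1$, hence by one-variable Runge it is approximable uniformly on $K_1$ by some $\rho_k\in r_{L_1}(K_1)$; and $w\mapsto g(\zeta_k,w)$ is holomorphic on $V'\supset K'$, hence by the induction hypothesis approximable uniformly on $K'$ by some $\sigma_k\in r_{L'}(K')$, where $L'=(L_2,\dots,L_d)$. A routine estimate (all the factors involved are bounded on the relevant compacta, so $\|ab-a'b'\|$ is controlled by $\|a-a'\|$ and $\|b-b'\|$) shows that $\sum_k c_k\,\rho_k(z_1)\,\sigma_k(w)$ is uniformly close on $K$ to the Riemann sum, hence to $g$. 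Finally, a product of an element of $r_{L_1}(K_1)$ with an element of $r_{L'}(K')$ is again a finite sum of finite products of one-variable rational functions with poles in the prescribed sets, so this combination lies in $r_L(K)$. Thus $g\in R_L(K)$, completing the induction and the proof.

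I do not expect a genuine obstacle here; the work is bookkeeping. The points that need care are: extracting the product neighbourhood by the tube lemma (so that the cycle $\Gamma$ and the Cauchy formula are valid simultaneously for all $w\in V'$, which is exactly why the product neighbourhood is produced first); recalling that the Runge cycle $\Gamma$ depends only on $(K_1,U_1)$; and controlling the error when $(\zeta_k-z_1)^{-1}$ and $g(\zeta_k,w)$ are replaced by $\rho_k$ and $\sigma_k$ inside a finite sum of products. Each of these is standard.
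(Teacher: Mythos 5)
Your proposal is correct and follows essentially the same route as the paper: reduce to approximating $\mathcal O(K)$ by $r_L(K)$, pass to a product neighbourhood, induct on $d$ via a one-variable Cauchy integral over a cycle avoiding one factor, approximate by Riemann sums, and then replace the Cauchy kernels by one-variable Runge approximants and the coefficient functions by the inductive hypothesis. The only (immaterial) difference is that you peel off the first variable where the paper peels off the last.
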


\begin{proof}
	It suffices to prove that every function $g$ in $\mathcal O(K)$ can be uniformly approximated on $K$ by functions in $r_{L}(K)$. Let $g$ be a holomorphic function on an open set $V_{g}$ with $K\subset V_{g}\subset \mathbb C^{d}$. We can  find open sets $U_{i}$, $i=1,\ldots,d$, such that $K_{i}\subset U_{i}\subset \mathbb C$ and $K\subset U_{1}\times\cdots\times U_{d}\subset V_{g}$.
		
	For $d=1$ the result follows from Runge's Theorem  in the complex plane. We proceed by induction. Suppose   the statement of the theorem holds for $d-1$. For a point $(z_{1},\ldots,z_{d})\in \mathbb C^{d}$ we write $w=(z_{1},\ldots,z_{d-1})$ and $z=z_{d}$. We choose a cycle $\gamma$ in $U_{d}\setminus K_{d}$ such that 
	\[
		g(w,z)=\frac{1}{2\pi i}\int_{\gamma}\frac{g(w,\zeta)}{\zeta-z}\, d\zeta
	\]
	for all $z\in K_{d}$ and $w\in U_{1}\times\cdots\times U_{d-1}$. The cycle $\gamma$ can be chosen to be a finite sum of closed polygons. Since the map $(\zeta, w,z)\mapsto \frac{g(w,\zeta)}{\zeta-z}$ is uniformly continuous on the compact set $\gamma\times K_{1}\times \cdots\times K_{d-1}\times K_{d}$ and the total length of $\gamma$ is finite, the above Cauchy integral can be uniformly approximated by Riemann sums. Thus, for each $\varepsilon >0$  there exists  a natural number $M$, constants $c_{1},\ldots, c_{M}\in \mathbb C$ and complex numbers $\zeta_{1},\ldots,\zeta_{M}$ in $\gamma$ such that
	\[
		\Big\vert g(w,z)-\sum_{k=1}^{M}c_{k} g(w,\zeta_{k})\frac{1}{\zeta_{k}-z}\Big\vert<\frac{\varepsilon}{2}
	\]
	for all $(w,z)\in K$.
		
	By the induction hypothesis, the functions $w\mapsto g(w,\zeta_{k})$,  can be approximated uniformly on $K_{1}\times \cdots\times K_{d-1}$ by functions $g_{k}(w)$ that are finite sums of finite products of functions of one complex variable with poles in the set $L_{i}$, $i=1,\ldots,d-1$ for $k=1,\ldots,M$. Since $\zeta_{k}\in \mathbb C\setminus K_{d}$ for $k=1,\ldots, M$, by Runge's Theorem in one variable, the functions $z\mapsto \frac{1}{\zeta_{k}-z}$ can be uniformly approximated on $K_{d}$ by rational functions of the variable $z=z_{d}$ with poles in $L_{d}$.  Thus, there exists $f_{k}\in r_{L}(K_{d})$, $k=1,\ldots,M$, satisfying 
	\[
		\Big\vert \sum_{k=1}^{M}c_{k}g(w,\zeta_{k})\frac{1}{\zeta_{k}-z}-\sum_{k=1}^{M}c_{k}g_{k}(w)f_{k}(z)\Big\vert <\frac{\varepsilon}{2}
	\]
	for all $(w,z)\in K$. We denote by $f(w,z)=\sum_{k=1}^{M}c_{k}g_{k}(w)f_{k}(z)\in r_{L}(K)$. It follows that
	\[
		\vert g(w,z)-f(w,z)\vert<\varepsilon
	\]
	for all $(w,z)\in K$, which proves the desired statement.
\end{proof}

Since for a product of planar compact sets $K=K_{1}\times \cdots \times K_{d}$ all the algebras $R_{L}(K)$, $R(K)$   and $\overline{\mathcal O}(K)$ coincide, we shall denote any of them simply by $R(K)$. However, for  compact sets $K\subset \mathbb C^{d}$ which are not products of planar compact sets, it is not always true that $R(K)= \overline{\mathcal O}(K)$. For an example see Section \ref{appendix}.

From the previous theorem we obtain that the algebra $R(K_{1}\times \cdots \times K_{d})$ coincides with the tensor product of the algebras $R(K_{1}), \ldots, R(K_{d})$, which we consider as in \cite{MR0249639} to be the closure in $C(K)$ of finite sums of finite products of elements of $R(K_{i})$, $i=1,\ldots,d$. 
\begin{cor}
	For planar compact sets $K_{1}, \ldots, K_{d}$, we have 
	\[
		R(K_{1}\times \cdots \times K_{d})=R(K_{1})\otimes \cdots \otimes R(K_{d}).
	\]
\end{cor}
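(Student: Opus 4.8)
The plan is to read the corollary off from Theorem~\ref{algebrasequivalences}, so that essentially no new work is needed. Write $K=K_{1}\times\cdots\times K_{d}$ and let $\mathcal T$ denote the tensor product $R(K_{1})\otimes\cdots\otimes R(K_{d})$, i.e.\ the closure in $C(K)$ of the linear span of the functions $(z_{1},\ldots,z_{d})\mapsto g_{1}(z_{1})\cdots g_{d}(z_{d})$ with $g_{i}\in R(K_{i})$; here I use that each $R(K_{i})$ is a closed subalgebra of $C(K_{i})$ in order to absorb finite products of factors depending on the same variable into a single factor, so that the displayed products are genuine generators of $\mathcal T$.

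First I would verify $\mathcal T\subseteq R(K)$. Since $R(K)$ is a closed subalgebra of $C(K)$, it suffices to place each generating product $g_{1}(z_{1})\cdots g_{d}(z_{d})$ in $R(K)$. Given $\varepsilon>0$, choose $\rho_{i}\in r(K_{i})$ approximating $g_{i}$ uniformly on $K_{i}$ closely enough that a standard telescoping estimate forces $g_{1}(z_{1})\cdots g_{d}(z_{d})$ and $\rho_{1}(z_{1})\cdots\rho_{d}(z_{d})$ to agree to within $\varepsilon$ uniformly on $K$. The product $\rho_{1}(z_{1})\cdots\rho_{d}(z_{d})$, regarded as a function on $\mathbb C^{d}$, is rational with singularities only where some coordinate $z_{i}$ hits a pole of $\rho_{i}$, hence off $K_{i}$ and so off $K$; thus it lies in $r(K)\subseteq R(K)$. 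Letting $\varepsilon\to 0$ and using that $R(K)$ is closed gives $g_{1}(z_{1})\cdots g_{d}(z_{d})\in R(K)$, and hence $\mathcal T\subseteq R(K)$.

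For the reverse inclusion I would invoke Theorem~\ref{algebrasequivalences}, which identifies $R(K)$ with $R_{L}(K)$ for an admissible choice of pole sets $L_{i}$. A generator of $r_{L}(K)$ is a finite product of one-variable rational functions $q_{i}(z_{i})$ with poles in $L_{i}\subset(\mathbb C\cup\{\infty\})\setminus K_{i}$; each $q_{i}$, viewed as a function on $K_{i}$, belongs to $r(K_{i})\subseteq R(K_{i})$, so the product is one of the generating elements of $\mathcal T$. Therefore $r_{L}(K)\subseteq\mathcal T$, and since $\mathcal T$ is closed in $C(K)$ we get $R(K)=R_{L}(K)=\overline{r_{L}(K)}\subseteq\mathcal T$. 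Combining the two inclusions yields $R(K)=\mathcal T$, which is the assertion.

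There is no genuine obstacle here beyond unwinding the definitions; the one point worth flagging is that the elementary argument above, on its own, only delivers the chain $R_{L}(K)\subseteq\mathcal T\subseteq R(K)$, and it is precisely the nontrivial equality $R_{L}(K)=R(K)$ supplied by Theorem~\ref{algebrasequivalences} that collapses this chain and makes the corollary true.
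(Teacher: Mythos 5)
Your argument is correct and is exactly the deduction the paper intends: the corollary is stated as an immediate consequence of Theorem~\ref{algebrasequivalences}, with the inclusion $R(K_1)\otimes\cdots\otimes R(K_d)\subseteq R(K)$ being the routine approximation of each factor by one-variable rationals and the reverse inclusion coming from $R(K)=R_L(K)$. Your write-up simply makes explicit the details the paper leaves to the reader.
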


In Theorem \ref{algebrasequivalences} the allowable singularities of the approximating functions depend strongly on the compact set $K=K_{1}\times\cdots\times K_{d}$.  In the following corollary we show that for products of planar open sets $U=U_{1}\times\cdots\times U_{d}$, we can choose a common set of singularities that works for all compact subsets of $U$. We shall denote by $r_{L}(U)$  the set of finite sums of finite products of rational functions  of one variable $z_{i}$ with poles in a set $L_{i}$, $i=1,\ldots,d$.

\begin{cor}
	\label{thrm:convergenceoncompatsubsets}
	Let $U_{1},\ldots,U_{d}$ be open subsets of $\mathbb C$ and $U=U_{1}\times\cdots\times U_{d}\subset \mathbb C^{d}$. For $i=1,\ldots,d$, let $L_{i}$ be a set containing at least one point from each component of $(\mathbb C\cup\{\infty\})\setminus U_{i}$. For each $f\in \mathcal O(U)$ there exists a sequence $g_{n}\in r_{L}(U)$, $n=1,2,\ldots$ converging to $f$ uniformly on compacta of $U$. Therefore, the set of rational functions   with set of singular points disjoint from $U$ is dense in $\mathcal O(U)$.
\end{cor}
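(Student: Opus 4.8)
The plan is to deduce the corollary from Theorem~\ref{algebrasequivalences} by exhausting $U$ by products of planar compact sets and then running a diagonal argument, the one delicate point being that a \emph{single} tuple of allowed poles must work for every member of the exhaustion. Throughout I assume, as is implicit in the definition of $r_{L}(U)$, that $L_{i}\cap U_{i}=\emptyset$ for each $i$; this is harmless, since a point of $L_i$ lying in $U_i$ belongs to no component of $(\mathbb C\cup\{\infty\})\setminus U_i$ and may simply be discarded.

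First I would fix, for each $i=1,\ldots,d$, the classical exhaustion used in the proof of Runge's theorem, namely the compact sets
\[
	K_{i}^{(n)}=\{z\in\mathbb C:\ |z|\le n,\ \operatorname{dist}(z,\mathbb C\setminus U_{i})\ge 1/n\},\qquad n=1,2,\ldots
\]
These increase, their union is $U_{i}$, every compact subset of $U_{i}$ lies in some $K_{i}^{(n)}$, and --- this is the key property, and the step most worth checking carefully --- every connected component of $(\mathbb C\cup\{\infty\})\setminus K_{i}^{(n)}$ contains an entire connected component of $(\mathbb C\cup\{\infty\})\setminus U_{i}$. Put $K^{(n)}=K_{1}^{(n)}\times\cdots\times K_{d}^{(n)}$. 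These are products of planar compact sets, they increase to $U$, and (by projecting) every compact $E\subset U$ is contained in some $K^{(n)}$.

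Since $L_{i}$ meets every component of $(\mathbb C\cup\{\infty\})\setminus U_{i}$ and every component of $(\mathbb C\cup\{\infty\})\setminus K_{i}^{(n)}$ contains such a component, $L_{i}$ meets every component of $(\mathbb C\cup\{\infty\})\setminus K_{i}^{(n)}$; hence the fixed tuple $L$ is admissible ``as in Definition~\ref{def:algebras}'' for each $K^{(n)}$ simultaneously. Now take $f\in\mathcal O(U)$. For every $n$ the restriction of $f$ to $K^{(n)}$ belongs to $\mathcal O(K^{(n)})\subset\overline{\mathcal O}(K^{(n)})$, which by Theorem~\ref{algebrasequivalences} equals $R_{L}(K^{(n)})$, the closure of $r_{L}(K^{(n)})$ in $C(K^{(n)})$. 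So I can choose $g_{n}\in r_{L}(K^{(n)})$ with $\sup_{K^{(n)}}|f-g_{n}|<1/n$; and since the one-variable factors appearing in $g_{n}$ have their poles in $L_{i}\subset(\mathbb C\cup\{\infty\})\setminus U_{i}$, the very same expression defines an element of $r_{L}(U)$.

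It remains to check that $g_{n}\to f$ uniformly on compacta of $U$. Given a compact $E\subset U$ and $\varepsilon>0$, pick $N$ with $E\subset K^{(N)}$; then for $n\ge N$ with $n>1/\varepsilon$ we have $E\subset K^{(N)}\subset K^{(n)}$, whence $\sup_{E}|f-g_{n}|\le\sup_{K^{(n)}}|f-g_{n}|<1/n\le\varepsilon$. This gives the convergence statement, and the final sentence of the corollary follows immediately, since each $g_{n}\in r_{L}(U)$ is a rational function of $z_{1},\ldots,z_{d}$ whose singular set lies in $\bigcup_{i}\bigcup_{\ell\in L_{i}}\{z_{i}=\ell\}$, which is disjoint from $U$. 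The only genuine obstacle is establishing the ``no superfluous holes'' property of the exhaustion $(K_{i}^{(n)})$, which is exactly what makes one fixed set of allowed poles serve the whole exhaustion; granting that, everything else is bookkeeping together with a single appeal to Theorem~\ref{algebrasequivalences}.
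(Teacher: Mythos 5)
Your proof is correct and follows essentially the same route as the paper: exhaust each $U_i$ by the standard normal sequence of compacta (so that every component of $(\mathbb C\cup\{\infty\})\setminus K_i^{(n)}$ contains a component of $(\mathbb C\cup\{\infty\})\setminus U_i$, making the fixed $L$ admissible for every $K^{(n)}$), apply Theorem~\ref{algebrasequivalences} to each product $K^{(n)}$, and conclude by the usual exhaustion argument. The "no superfluous holes" property you flag is the classical lemma about this exhaustion from the proof of Runge's theorem, and the paper likewise invokes it without proof.
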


\begin{proof}
	For each $i=1,\ldots, d$, let $K_{i}^{m}$, $m=1,2,\ldots$ be a normal exhaustive sequence of compact subsets of $U_{i}$. We notice that each connected component of $(\mathbb C\cup\{\infty\})\setminus K_{i}^{m}$ contains a connected component of $(\mathbb C\cup\{\infty\})\setminus U_{i}$. Thus, the set $L_{i}$ contains at least one point from each connected component of $(\mathbb C\cup\{\infty\})\setminus K_{i}^{m}$. We set $K^{m}=K_{1}^{m}\times\cdots\times K_{d}^{m}$. Then every compact subset of $U$ is contained in some $K^{m}$. Therefore, it suffices to find $g_{m}\in r_{L}(K^{m})=r_{L}(U)$ such that 
	\[
		\sup_{z\in K^{m}}\vert g_{m}(z)-f(z)\vert < \frac{1}{m}.
	\] 
	By Theorem \ref{algebrasequivalences} applied to the product  $K^{m}$ such a $g_{m}$ exists. This completes the proof.
\end{proof}

\begin{prop}
	\label{propopencompacts}
	Let $K_{i}$, $i=1,\ldots,d$, be compact sets in $\mathbb C$ and $K=K_{1}\times\cdots\times K_{d}$. Using the notation of Theorem \ref{algebrasequivalences}, if $f\in \mathcal{O}(K)$, $\varepsilon>0$ and $I$ is a finite subset of $\{0,1,2,\ldots\}$, then there exists $g\in r_{L}(K)$ such that for every $(\alpha_1,\ldots,\alpha_d)\in I^d$ we have
	\[
		\sup_{z\in K}\Big\vert  \frac{\partial^{\alpha_1+\cdots+\alpha_d}}{\partial z_1^{\alpha_1}\cdots\partial z_d^{\alpha_d}}(g-f)(z)\Big\vert <\varepsilon.
	\]
\end{prop}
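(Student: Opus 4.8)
The plan is to represent $f$ near $K$ by an iterated Cauchy integral, to replace that integral by a single Riemann sum which simultaneously approximates every derivative $\partial^\alpha f$, $\alpha\in I^d$, on $K$, and finally to push the poles of the Riemann sum into the prescribed sets $L_i$ while keeping finitely many derivatives under control. Throughout write $N=\max I$ and $\partial^\alpha=\partial^{\alpha_1+\cdots+\alpha_d}/\partial z_1^{\alpha_1}\cdots\partial z_d^{\alpha_d}$.

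Since $f\in\mathcal O(K)$, there is an open $V\subset\mathbb C^d$ with $K\subset V$ on which $f$ is holomorphic, and by compactness one may choose open sets $U_i\subset\mathbb C$ with $K_i\subset U_i$, $\overline{U_i}$ compact and $U_1\times\cdots\times U_d\subset V$. For each $i$ fix, in the usual way, a cycle $\gamma_i$ in $U_i\setminus K_i$ (a finite union of segments) with winding number $1$ about each point of $K_i$ and $0$ about each point outside $U_i$. Iterating the one–variable Cauchy formula coordinate by coordinate gives, for $z\in K$,
\[
f(z)=\frac1{(2\pi i)^d}\int_{\gamma_1}\!\!\cdots\!\int_{\gamma_d}\frac{f(\zeta_1,\dots,\zeta_d)}{(\zeta_1-z_1)\cdots(\zeta_d-z_d)}\,d\zeta_d\cdots d\zeta_1,
\]
and, differentiating under the integral sign, $\partial^\alpha f(z)=\frac{\alpha_1!\cdots\alpha_d!}{(2\pi i)^d}\int_{\gamma_1}\cdots\int_{\gamma_d}f(\zeta)\prod_{i=1}^d(\zeta_i-z_i)^{-\alpha_i-1}\,d\zeta$ for every $\alpha\in I^d$ and $z\in K$.

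Since $\mathrm{dist}(\gamma_i,K_i)>0$, each of the finitely many integrands above is uniformly continuous on the compact set $(\gamma_1\times\cdots\times\gamma_d)\times K$, so a partition of each $\gamma_i$ into subarcs, fine enough to serve for all $\alpha\in I^d$ at once, produces sample points $\zeta_i^{(1)},\dots,\zeta_i^{(m_i)}\in\gamma_i$ and constants $c_{\vec j}$ (indexed by $\vec j=(j_1,\dots,j_d)$) such that
\[
S(z):=\sum_{\vec j}c_{\vec j}\,f\bigl(\zeta_1^{(j_1)},\dots,\zeta_d^{(j_d)}\bigr)\prod_{i=1}^d\frac1{\zeta_i^{(j_i)}-z_i}
\]
satisfies $\sup_{z\in K}|\partial^\alpha(S-f)(z)|<\varepsilon/2$ for all $\alpha\in I^d$; here one uses that, the variables being separated, $\partial^\alpha S$ is itself the Riemann sum, with weights built from the same $c_{\vec j}$, for the Cauchy integral representing $\partial^\alpha f$. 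Note that $S$ is a finite sum of finite products of one–variable rational functions, but with poles at the points $\zeta_i^{(s)}$, which need not lie in $L_i$.

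The last, and I expect most delicate, step is the pole–pushing. Fix $i$ and a sample point $\zeta_i^{(s)}\in U_i\setminus K_i$; it lies in some connected component of $(\mathbb C\cup\{\infty\})\setminus K_i$, and by hypothesis $L_i$ contains a point $p$ of that component. Joining $\zeta_i^{(s)}$ to $p$ by a path inside the component and expanding $1/(\zeta_i^{(s)}-z_i)$ in successive Laurent (or, near $\infty$, Taylor) series about finitely many centres marching along the path, one obtains a rational function $f_{i,s}(z_i)$ with poles only in $L_i$ such that $\max_{0\le k\le N}\sup_{z_i\in K_i}\bigl|\frac{d^k}{dz_i^k}\bigl(f_{i,s}-\tfrac1{\zeta_i^{(s)}-\cdot}\bigr)(z_i)\bigr|<\eta$; the key point is that each of these series converges uniformly not only on $K_i$ but on a neighbourhood of $K_i$, so by Cauchy's estimates the convergence is inherited by all derivatives of order $\le N$, and $\eta>0$ can be made as small as we wish. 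Put $g(z):=\sum_{\vec j}c_{\vec j}\,f(\zeta^{(\vec j)})\prod_{i=1}^d f_{i,j_i}(z_i)$, which is a finite sum of finite products of rational functions of one variable $z_i$ with poles in $L_i$, hence $g\in r_L(K)$. Since $\partial^\alpha\bigl(\prod_i h_i(z_i)\bigr)=\prod_i h_i^{(\alpha_i)}(z_i)$ for functions of separate variables, and the derivatives $\frac{d^k}{dz_i^k}\frac1{\zeta_i^{(s)}-\cdot}$ are uniformly bounded on $K_i$ for $k\le N$ (there being finitely many of them), a telescoping estimate over the $d$ factors gives $\sup_{z\in K}|\partial^\alpha(g-S)(z)|\le C\eta$ for all $\alpha\in I^d$, with $C$ independent of $\eta$. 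Choosing $\eta<\varepsilon/(2C)$ and adding the two estimates yields $\sup_{z\in K}|\partial^\alpha(g-f)(z)|<\varepsilon$ for every $\alpha\in I^d$, as required. The only genuine obstacle is the simultaneous control of the prescribed finite family of derivatives during the one–variable pole–pushing; the rest is a routine Cauchy integral and Riemann sum computation.
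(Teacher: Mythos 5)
Your argument is correct, but it takes a genuinely different route from the paper's. The paper disposes of this proposition in three lines: it shrinks the domain of holomorphy of $f$ to an open product neighbourhood $U=U_1\times\cdots\times U_d$ of $K$, invokes Corollary \ref{thrm:convergenceoncompatsubsets} to produce a sequence in $r_L(U)$ converging to $f$ uniformly on compacta of $U$, and then cites the Weierstrass convergence theorem (locally uniform convergence of holomorphic functions forces locally uniform convergence of all partial derivatives) to transfer the approximation to every $\partial^\alpha$ on the compact set $K\subset U$. You instead rebuild the whole mechanism of Theorem \ref{algebrasequivalences} from scratch with explicit derivative control: an iterated Cauchy integral, a single Riemann sum whose $\partial^\alpha$ is automatically the Riemann sum of the integral representing $\partial^\alpha f$ (valid because the variables separate and there are only finitely many $\alpha\in I^d$ to accommodate with one mesh), and pole-pushing in which each geometric-series step converges on an open neighbourhood of $K_i$, so Cauchy estimates propagate the error bound to the finitely many derivatives of order at most $N$. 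What the paper's route buys is brevity and reuse of already-established results; what yours buys is self-containedness and a cleaner logical footing on one small point: the paper's appeal to Corollary \ref{thrm:convergenceoncompatsubsets} tacitly requires the given $L_i$ to meet every component of $(\mathbb C\cup\{\infty\})\setminus U_i$, not just of $(\mathbb C\cup\{\infty\})\setminus K_i$, whereas your pole-pushing only ever needs each sample point $\zeta_i^{(s)}$ to be joinable to a point of $L_i$ inside a component of the complement of $K_i$ itself, which is exactly what Definition \ref{def:algebras} guarantees. Your telescoping estimate for $\partial^\alpha(g-S)$ is sound since the factors $\frac{d^k}{dz_i^k}\frac{1}{\zeta_i^{(s)}-z_i}$, $k\le N$, are uniformly bounded on $K_i$ and there are finitely many of them.
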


\begin{proof}
	The function $f$ is holomorphic on an open set $V_{f}$ such that $K\subset V_{f}\subset \mathbb C^d$. We can find open sets $U_i$ with $K_i\subset U_i\subset \mathbb C$ such that the open set $U=U_1\times \cdots\times U_d$ satisfies $K\subset U\subset V_{f}$. Thus $f\in \mathcal O(U)$. According to Corollary \ref{thrm:convergenceoncompatsubsets} there exists a sequence $g_m\in r_{L}(U)$, $m=1,2,\ldots$ converging to $f$ uniformly on compacta of $U$.
	Since uniform convergence of holomorphic functions on an open set $U$ implies uniform convergence of partial derivatives on compact subsets of $U$, we have that for every $(\alpha_1,\ldots,\alpha_d)\in I^d$ 
	\[
		\sup_{z\in K}\Big\vert  \frac{\partial^{\alpha_1+\cdots+\alpha_d}}{\partial z_1^{\alpha_1}\cdots\partial z_d^{\alpha_d}}(g_{m}-f)\Big\vert\to 0
	\] 
	as $m\to\infty$. It suffices to set $g=g_m$ for sufficiently large  $m$. This concludes the proof.
\end{proof}

Let $g=(g_m)_{m=1}^{\infty}$  be a sequence in $ \mathcal O(K)$  such that for every $\alpha=(\alpha_1,\ldots,\alpha_d)\in \{0,1,\ldots\}^d$, the corresponding sequence of  mixed partial derivatives $\frac{\partial^{\alpha_1+\cdots+\alpha_d}}{\partial z_1^{\alpha_1}\cdots\partial z_d^{\alpha_d}}g_m$ converges uniformly on $K$ to some function $w_{\alpha,g}$ that depends on $\alpha$ and the sequence $g=(g_m)_{m=1}^\infty$. We denote by $R^\infty(K)$ the set of families
\[
	\{w_{\alpha,g}:\alpha\in\{0,1,\ldots\}^d \text{ for } g=(g_m)_{m=1}^\infty\text{ in } \mathcal O(K) \text{ as above}\}.
\]

Proposition \ref{propopencompacts} implies that in the definition of $R^{\infty}(K)$ it is sufficient to consider only sequences $g=(g_n)_{n=1}^{\infty}$ where $g_{n}\in r_{L}(K)$. If $K_i$ are regular closed then  $R^\infty(K)$ can be identified with a set of holomorphic functions on $K_1^\circ\times\cdots\times K_d^\circ$, because $w_{\alpha,g}=\frac{\partial^{\alpha_1+\cdots+\alpha_d}}{\partial z_1^{\alpha_1}\cdots\partial z_d^{\alpha_d}}w$ on $K_1^\circ\times\cdots\times K_d^\circ$, where $w=\lim_{n\to \infty}g_n$ and all these functions extend continuously on $K$. Then, $r_{L}(K)$ is dense in $R^\infty(K)$ endowed with the seminorms
\[
	\sup_{z\in K^\circ}\Big\vert \frac{\partial^{\alpha_1+\cdots+\alpha_d}}{\partial z_1^{\alpha_1}\cdots\partial z_d^{\alpha_d}}w(z)\Big\vert,\hspace{8mm}\text{for } (\alpha_1,\ldots,\alpha_d)\in\{0,1,2,\ldots\}^d
\]
 provided that $K_i$ is regular closed for $i=1,\ldots,d$.

We close this section replacing rational approximation by approximation by polynomials, see \cite{Gauthier2014}. For an open set $U \subset \mathbb C^d$ we denote by $P(U)$ the class of functions $f:U\to \mathbb C$ such that there exists a sequence $(P_n)_{n=1}^\infty$ of polynomials of $d$ complex variables converging to $f$ uniformly on each compact subset of $U$.

\begin{prop}
	\label{aequalsb}
	Let $K_i \subset \mathbb C$, $i=1,\ldots,d$, be compact sets and $K=K_1 \times \cdots \times K_d$. Then the following are equivalent:
	\begin{itemize}
		\item[(a)] $P(K) = \overline{\mathcal{O}}(K)$;
		\item[(b)] For every $i=1,\ldots,d$ the set $\mathbb C \setminus K_i$ is connected.
	\end{itemize}
\end{prop}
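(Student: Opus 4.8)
The plan is to treat the two implications separately; in each the statement reduces to one complex variable and to Runge's theorem. (We take each $K_i$ non-empty, as is implicit in the statement, so that $K\neq\emptyset$.)

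For (b)$\Rightarrow$(a) I would invoke Theorem \ref{algebrasequivalences}. Since $P(K)\subseteq\overline{\mathcal O}(K)$ always holds, only the reverse inclusion is needed. The point is that when $\mathbb C\setminus K_i$ is connected, the compactness of $K_i$ makes $(\mathbb C\cup\{\infty\})\setminus K_i$ connected as well, so $L_i=\{\infty\}$ is an admissible choice in Definition \ref{def:algebras}; and with this choice $r_L(K)$ consists precisely of the restrictions to $K$ of polynomials in $z_1,\ldots,z_d$ (a rational function of one variable with poles only at $\infty$ is a polynomial, and finite sums of finite products of such single-variable polynomials are exactly the polynomials in $d$ variables). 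Hence $R_L(K)=P(K)$, while Theorem \ref{algebrasequivalences} gives $R_L(K)=\overline{\mathcal O}(K)$, and (a) follows.

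For (a)$\Rightarrow$(b) I would argue by contraposition. Assume $\mathbb C\setminus K_j$ is disconnected for some $j$; one may take $j=1$, the other cases being identical. Compactness of $K_1$ forces a bounded complementary component $\Omega$, and $\partial\Omega\subseteq K_1$; fixing $a\in\Omega$, the function $h(z)=1/(z_1-a)$ lies in $\mathcal O(K)\subseteq\overline{\mathcal O}(K)$ because it is holomorphic on $\{z\in\mathbb C^{d}:z_1\neq a\}\supseteq K$. The claim is that $h\notin P(K)$, which already yields $P(K)\subsetneq\overline{\mathcal O}(K)$, i.e. the failure of (a). To prove the claim, suppose polynomials $P_n\to h$ uniformly on $K$; fixing points $\zeta_i\in K_i$ for $i\geq 2$ and restricting to the slice $(z_2,\ldots,z_d)=(\zeta_2,\ldots,\zeta_d)$ turns this into one-variable polynomials $p_n(z_1)=P_n(z_1,\zeta_2,\ldots,\zeta_d)$ converging to $1/(z_1-a)$ uniformly on $K_1$, hence on $\partial\Omega$. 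By the maximum modulus principle the $p_n$ are uniformly Cauchy on $\overline\Omega$, with limit $\phi$ continuous on $\overline\Omega$, holomorphic in $\Omega$, and agreeing with $1/(z_1-a)$ on $\partial\Omega$; then $F(z_1)=(z_1-a)\phi(z_1)-1$ is holomorphic in $\Omega$, continuous on $\overline\Omega$, and vanishes on $\partial\Omega$, so $F\equiv 0$ on $\overline\Omega$ by maximum modulus, contradicting $F(a)=-1$.

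I do not expect a serious obstacle here: the content is just one-variable Runge. The two steps I would be most careful about are (i) checking that $L_i=\{\infty\}$ is a legitimate choice and that $r_L(K)$ is then exactly the polynomials, and (ii) the passage from $d$ variables to one by fixing a slice $(z_2,\ldots,z_d)$, which is what lets the classical ``hole-filling'' obstruction to polynomial approximation on $K_1$ do the work.
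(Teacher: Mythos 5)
Your proposal is correct and follows essentially the same route as the paper: $L_i=\{\infty\}$ together with Theorem \ref{algebrasequivalences} for (b)$\Rightarrow$(a), and for (a)$\Rightarrow$(b) the slice to one variable followed by the maximum modulus principle applied to $(z_1-a)(\text{approximant})-1$ on the bounded complementary component. The only (cosmetic) difference is that you pass to the limit function $\phi$ before invoking maximum modulus, whereas the paper works with a single quantitatively chosen approximant $Q_{n_0}$; your remark that the $K_i$ should be non-empty is a fair point the paper leaves implicit.
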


\begin{proof}
	To show that $(b)$ implies $(a)$ it suffices to set $L_i=\{\infty\}$ and apply Theorem \ref{algebrasequivalences}, because every rational function of one variable with poles contained in $\{\infty\}$ is a polynomial of one variable.
		
	To see that $(a)$ implies $(b)$, let $i_0\in \{1,\ldots,d\}$ be fixed. Suppose $(a)$ and suppose that $\mathbb C\setminus K_{i_0}$ is not connected. We can pick a point $b\in \mathbb C$  in a bounded connected component $V$ of $\mathbb C\setminus K_{i_0}$. Then the function $f(z_1,\ldots, z_d)=\frac{1}{z_{i_0}-b}$ belongs to $\overline{\mathcal O}(K)$ and so is a uniform limit on $K$ of polynomials $(P_n(z_1,\ldots,z_d))_{n=1}^\infty$. We fix $z_i=w_i\in K_i$ for all $i\in\{1,\ldots,d\}\setminus\{i_0\}$. Then the sequence of polynomials in one variable, $Q_n(z)=P_n(z_1(z),\ldots,z_d(z))$, where $z_{i_0}(z)=z$ and $z_i(z)=w_i$ for all $i\ne i_0$, converges uniformly on $K_{i_0}$ to $\frac{1}{z_{i_0}-b}$.  
	Fix a natural number $n_0$ so that 
	\[
		\max_{z_{i_0}\in K_{i_0}}\Big\vert Q_{n_0}(z_{i_0}) - \frac{1}{z_{i_0}-b}\Big\vert<\frac{1}{2\max_{z_{i_0}\in \partial V}\vert z_{i_0}-b\vert}.
	\] Then, by the maximum modulus principle on $V$,
	\begin{align*}
		1/2 & >\max_{z_{i_0}\in \partial V}\vert z_{i_0}-b\vert\cdot \max_{z_{i_0}\in\partial V}\Big\vert Q_{n_0}(z_{i_0}) - \frac{1}{z_{i_0}-b}\Big\vert \\
		    & \geq\max_{z_{i_0}\in \partial V}\vert (z_{i_0}-b) Q_{n_0}(z_{i_0}) -1\vert                                                                  \\
		    & \geq\max_{z_{i_0}\in \overline{V}}\vert (z_{i_0}-b) Q_{n_0}(z_{i_0}) -1\vert                                                                \\
		    & \geq\vert (b-b) Q_{n_0}(b) -1\vert\geq 1,                                                                                                   
	\end{align*}
	which is a contradiction. Thus, $\mathbb C \setminus K_{i_{0}}$ is connected and the proof is complete.
		
\end{proof}

Arguing in a similar way we obtain the following:

\begin{prop}
	Let $U_i \subset \mathbb C$, $i=1,\ldots,d$,  be open sets and $U=U_1\times\cdots\times U_d$. Then the following are equivalent:
	\begin{itemize}
		\item[(a)] $P(U) = \mathcal O(U)$;
		\item[(b)] For every $i=1,\ldots,d$, the set $U_i$ is simply connected.
	\end{itemize}
\end{prop}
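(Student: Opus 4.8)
The plan is to mimic the argument used for Proposition \ref{aequalsb}, adapting it from the setting of compacta with $\overline{\mathcal O}(K)$ to open sets with $\mathcal O(U)$. For the implication $(b)\Rightarrow(a)$, I would invoke the one-variable Runge theorem in the form that says that on a simply connected open set $U_i\subset\mathbb C$ every holomorphic function of one variable is a locally uniform limit of polynomials; equivalently, the argument of Corollary \ref{thrm:convergenceoncompatsubsets} with $L_i=\{\infty\}$ (legitimate precisely because simple connectivity of $U_i$ forces $(\mathbb C\cup\{\infty\})\setminus U_i$ to be connected, so a single point $\infty$ suffices) shows that $r_L(U)$, which then consists of finite sums of finite products of one-variable polynomials—hence polynomials of $d$ variables—is dense in $\mathcal O(U)$ for the topology of locally uniform convergence. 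That gives $\mathcal O(U)\subset P(U)$, and the reverse inclusion is trivial since each polynomial is entire, hence holomorphic on $U$.

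For $(a)\Rightarrow(b)$, I argue contrapositively exactly as in Proposition \ref{aequalsb}. Fix $i_0$ and suppose $U_{i_0}$ is not simply connected; then $(\mathbb C\cup\{\infty\})\setminus U_{i_0}$ is disconnected, so there is a bounded connected component $V$ of $\mathbb C\setminus U_{i_0}$ with $\overline V\subset\mathbb C$ and $\partial V\subset U_{i_0}$. Pick $b\in V$; the function $f(z_1,\ldots,z_d)=1/(z_{i_0}-b)$ lies in $\mathcal O(U)$ (it is holomorphic on all of $\mathbb C^d$ minus the hyperplane $z_{i_0}=b$, which misses $U$). If $(a)$ held, there would be polynomials $P_n$ converging to $f$ uniformly on the compact set $\partial V\times\prod_{i\neq i_0}K_i$ for any choice of compacta $K_i\subset U_i$; freezing $z_i=w_i\in U_i$ for $i\neq i_0$ and with $w_i$ chosen so that $\partial V\times\{w\}\subset U$, the one-variable polynomials $Q_n(z)=P_n(\ldots,z,\ldots)$ converge to $1/(z-b)$ uniformly on $\partial V$. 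Now the same maximum modulus estimate as in Proposition \ref{aequalsb}—bounding $|(z-b)Q_{n_0}(z)-1|$ on $\partial V$, propagating the bound to $\overline V$ since $(z-b)Q_{n_0}(z)-1$ is holomorphic on $V$ and continuous on $\overline V$, and evaluating at $b$ to get $1\le 1/2$—yields a contradiction. Hence each $U_{i_0}$ is simply connected.

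\textbf{Main obstacle.} The genuinely new point relative to Proposition \ref{aequalsb} is the locally-uniform (rather than uniform) nature of the convergence, which forces a little care in choosing the compacta on which to run the estimate: in the $(a)\Rightarrow(b)$ direction one must ensure that $\partial V\times\{(w_i)_{i\neq i_0}\}$ actually sits inside $U$ so that uniform convergence of $P_n\to f$ there is available, and this is automatic because $\partial V\subset U_{i_0}$ and each $U_i$ is open and nonempty. Beyond that the proof is a routine transcription; the maximum modulus trick and the role of a bounded complementary component are identical to the compact case, so I do not expect any serious difficulty.
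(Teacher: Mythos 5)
Your direction $(b)\Rightarrow(a)$ is fine and is exactly the intended route: simple connectivity of each $U_i$ lets you take $L_i=\{\infty\}$ in Corollary \ref{thrm:convergenceoncompatsubsets}, so that $r_L(U)$ consists of polynomials, and the reverse inclusion is trivial. The gap is in $(a)\Rightarrow(b)$, and it sits precisely at the point you declared ``automatic.'' You take $V$ to be a bounded connected component of $\mathbb C\setminus U_{i_0}$ and assert $\partial V\subset U_{i_0}$. But $\mathbb C\setminus U_{i_0}$ is \emph{closed}, so its components are closed, whence $\partial V\subset V\subset\mathbb C\setminus U_{i_0}$; in fact $\partial V\cap U_{i_0}=\emptyset$. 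Concretely, for $U_{i_0}=\{1<|z|<2\}$ the bounded component of the complement is $\overline{D(0,1)}$ and its boundary is the unit circle, which lies entirely outside $U_{i_0}$. Consequently $\partial V\times\{(w_i)_{i\ne i_0}\}$ is not a compact subset of $U$, locally uniform convergence of $P_n\to f$ gives you nothing on it, and the maximum modulus step has no estimate to start from. The confusion comes from transcribing Proposition \ref{aequalsb} too literally: there $\mathbb C\setminus K_{i_0}$ is \emph{open}, its components are open, and the boundary of a bounded component does lie in $K_{i_0}$; here the open/closed roles are reversed.

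The repair is short but must be made. Write the disconnected compact set $(\mathbb C\cup\{\infty\})\setminus U_{i_0}$ as $F_1\sqcup F_2$ with $F_1,F_2$ nonempty, compact and disjoint, $\infty\in F_2$; then $F_1\subset\mathbb C$ is compact and $\delta:=\operatorname{dist}(F_1,F_2\cap\mathbb C)>0$. Put $W=\{z:\operatorname{dist}(z,F_1)<\delta/2\}$, a bounded open set with $F_1\subset W$ and $\partial W\cap(F_1\cup F_2)=\emptyset$, so that $T:=\partial W$ is a compact subset of $U_{i_0}$. Pick $b\in F_1$. Now run your argument with $T$ in place of $\partial V$: freezing the other variables gives one-variable polynomials $Q_n\to 1/(z-b)$ uniformly on $T$, and for every polynomial $Q$ the maximum principle on the bounded open set $W$ yields $|(b-b)Q(b)-1|\le\sup_{T}|(z-b)Q(z)-1|\le\sup_{T}|z-b|\cdot\sup_{T}|Q(z)-\tfrac{1}{z-b}|$, which is $<1$ for $Q=Q_{n_0}$ with $n_0$ large — a contradiction. (Alternatively, one can choose a cycle $\gamma\subset U_{i_0}$ with winding number $1$ about $b$ and compare $\int_\gamma Q_n=0$ with $\int_\gamma (z-b)^{-1}dz=2\pi i$.) The paper itself gives no written proof of this proposition, only the remark that one argues as in Proposition \ref{aequalsb}, so your strategy is the intended one; it is the topological step isolating a compact piece of $U_{i_0}$ surrounding the hole that needs to be done correctly.
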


As a direct consequence of Proposition \ref{propopencompacts} we obtain the following special case for polynomials.
\begin{prop}
	Let $K_{i}$, $i=1,\ldots,d$,  be compact sets in $\mathbb C$ such that $\mathbb C\setminus K_i$ is connected. Let $f$ be a function holomorphic on an open set containing $K=K_1\times\cdots\times K_d$, let $\varepsilon>0$ and $I$ a finite subset of $\{0,1,\ldots\}^d$. Then there exists a polynomial $Q(z_1,\ldots,z_d)$ such that
	\[
		\sup_{z\in K}\Big\vert  \frac{\partial^{\alpha_1+\cdots+\alpha_d}}{\partial z_1^{\alpha_1}\cdots\partial z_d^{\alpha_d}}(f-Q)(z)\Big\vert <\varepsilon
	\]
	for every $\alpha=(\alpha_1,\ldots,\alpha_d)\in I$.
\end{prop}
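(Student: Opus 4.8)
The plan is to reduce the last proposition to the case already handled by Proposition \ref{propopencompacts}, exploiting the hypothesis that each $\mathbb{C}\setminus K_i$ is connected. The statement of Proposition \ref{propopencompacts} is formulated for $g \in r_L(K)$ with the $L_i$ of Definition \ref{def:algebras}; the point here is simply to make a good choice of these $L_i$. Concretely, since $\mathbb{C}\setminus K_i$ is connected, the set $(\mathbb{C}\cup\{\infty\})\setminus K_i$ is also connected, so we may take $L_i = \{\infty\}$ for every $i=1,\ldots,d$. Then $r_L(K)$ consists of finite sums of finite products of rational functions of one variable $z_i$ whose only pole is at $\infty$, i.e.\ polynomials in $z_i$; hence $r_L(K)$ is precisely the set of polynomials in the $d$ variables $z_1,\ldots,z_d$.

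With this choice in hand, I would argue as follows. First I would invoke Proposition \ref{propopencompacts} with this $L$, the given $f \in \mathcal{O}(K)$, the given $\varepsilon>0$, and the given finite set $I \subset \{0,1,\ldots\}^d$. Strictly speaking Proposition \ref{propopencompacts} is stated with $I$ a finite subset of $\{0,1,2,\ldots\}$ and multi-indices ranging over $I^d$; to accommodate the present formulation where $I$ is already a finite set of multi-indices, I would set $I' \subset \{0,1,2,\ldots\}$ to be the (finite) set of all coordinates appearing in elements of $I$, apply Proposition \ref{propopencompacts} to $I'$, and note $I \subseteq (I')^d$. This yields $g \in r_L(K)$ with
\[
	\sup_{z\in K}\Big\vert  \frac{\partial^{\alpha_1+\cdots+\alpha_d}}{\partial z_1^{\alpha_1}\cdots\partial z_d^{\alpha_d}}(g-f)(z)\Big\vert <\varepsilon
\]
for every $\alpha=(\alpha_1,\ldots,\alpha_d)\in I$.

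Finally, since $r_L(K)$ with $L_i=\{\infty\}$ consists exactly of polynomials, the function $g$ is a polynomial $Q(z_1,\ldots,z_d)$, and the displayed estimate is precisely the conclusion of the proposition. This completes the argument.

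There is essentially no obstacle here: the proposition is a genuine corollary, and the only thing to be careful about is the bookkeeping identifying $r_L(K)$ for $L_i=\{\infty\}$ with the polynomial algebra, together with the minor reconciliation between ``$I$ a finite subset of $\{0,1,\ldots\}$ and exponents in $I^d$'' (the hypothesis of Proposition \ref{propopencompacts}) and ``$I$ a finite subset of $\{0,1,\ldots\}^d$'' (the hypothesis stated here). Both are routine. If one preferred a self-contained proof not routed through Proposition \ref{propopencompacts}, one could instead combine Proposition \ref{aequalsb} (which gives $P(K)=\overline{\mathcal O}(K)$ under hypothesis (b)) with the standard fact that locally uniform convergence of holomorphic functions forces locally uniform convergence of all partial derivatives; but the route via Proposition \ref{propopencompacts} is the most economical.
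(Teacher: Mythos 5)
Your proposal is correct and matches the paper's intent exactly: the paper states this proposition as a direct consequence of Proposition \ref{propopencompacts}, and your argument (choosing $L_i=\{\infty\}$ so that $r_L(K)$ is the polynomial algebra, plus the routine reconciliation of the index sets) is precisely the verification the paper leaves implicit.
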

 
\section{The algebra $A_{D}(K)$}
\label{Ac(K)}

Let $K\subset \mathbb C^{d}$ be a compact set.  As we already mentioned $P(K)\subset \overline{\mathcal{O}}(K)\subset A(K)$. One of the most challenging questions in approximation theory is to determine under which conditions on $K$, we have equalities. The situation in one complex variable is well understood. In particular, let $K$ be a compact set in $\mathbb C$. The equality $P(K)=\overline{\mathcal O}(K)$ holds if and only if the complement of $K$ is connected. The  same characterization holds for the equality $P(K)=A(K)$, according to the celebrated theorem of Mergelyan.  Vitushkin \cite{MR0229838} gave the best Mergelyan type theorem by providing necessary and sufficient conditions for the equality $\overline{\mathcal O}(K)=A(K)$. However, in several complex variables the analogous theory is much less developed, see \cite{FFW,MR2276419}. It is natural to investigate the case where $K$ is a product of planar compact sets. For example, in Proposition  \ref{algebrasequivalences} we saw that if $P(K_{i})=\overline{\mathcal O}(K_{i})$ then $P(K_{1}\times\cdots\times K_{d})=\overline{\mathcal O}(K_{1}\times\cdots\times K_{d})$.


The following question mentioned in  \cite[Section 6]{MR0249639} naturally arises.
\begin{quest}\label{GG}
	If $K_{i}\subset\mathbb{C}$ are compact sets such that $\overline{\mathcal {O}}(K_i)=A(K_i)$ for $i=1,2,\dots , d$, is it always true that $\overline{\mathcal{O}}(K_{1}\times\cdots \times K_{d})=A(K_{1}\times\cdots \times K_{d})$? 
\end{quest}

The example below provides a negative answer to this question.

\begin{ex}\label{counterexample1}
	Let $K_{1}=\{0\}\subset\mathbb{C}$ and $K_2=\{w\in\mathbb{C}: |w|\leq 1\}\subset \mathbb{C}$ and let $g: K_{2}\to\mathbb{C}$ be a continuous function which is not holomorphic on $\{w\in\mathbb{C}: |w|< 1\}$; for instance take $g(w)=|w|$. Let $f: K_{1}\times K_{2}\to \mathbb{C}$ be defined by $f(z,w)=g(w)$ for all $(z,w)\in K_1\times K_2$.  Since the interior of $K_1\times K_2$ in $\mathbb{C}^2$ is void, we have $A(K_1\times K_2)=C(K_1\times K_2)$. It follows that $f\in A(K_1\times K_2)$. Moreover, we have that $A(K_1)=\overline{\mathcal {O}}(K_1)=\mathbb{C}$ and $A(K_2)=\overline{\mathcal {O}}(K_2)$. An affirmative answer to Question \ref{GG} would then imply that $\overline{\mathcal{O}}(K_{1}\times K_{2})=A(K_{1}\times K_{2})$, and so $f\in \overline{\mathcal{O}}(K_{1}\times K_{2})$. Therefore, there would be a sequence  of functions $g_n$ holomorphic on open sets $V_n$ with $K_{1}\times K_{2}\subset V_n\subset \mathbb{C}^2$ such that 
	\[
		\sup _{(z,w)\in K_{1}\times K_{2}} |g_{n}(z,w)-f(z,w)|\to 0, \text{ as } n\to\infty .
	\]
	This implies that $g(w)=f(0,w)$ is holomorphic on  $\{w\in\mathbb{C}: |w|< 1\}$, which contradicts our assumption concerning the function $g$. 
Therefore the answer to Question \ref{GG} is negative.
		
\end{ex}

Our goal is to  replace the algebra $A(K)$ by an appropriate algebra so that the answer to Question \ref{GG} becomes positive. More generally, we would like to obtain Mergelyan type theorems for other compact sets in $\mathbb C^{d}$. We recall that $\overline{\mathcal{O}}(K)$ is the closure in $C(K)$ of functions holomorphic on (varying) open neighbourhoods of $K$. Thus, we shall consider sequences of functions $g_n$ holomorphic on open sets $V_n$ with $K\subset V_{n}\subset \mathbb{C}^d$ and converging uniformly on $K$ to some function $f: K\to\mathbb{C}$. We shall investigate some necessary properties of the limit function $f$. The definition of the new class $A_{D}(K)$ that we shall introduce will incorporate these necessary properties.

Firstly, each $g_n$ is continuous on $K$, therefore the uniform limit $f$ should be continuous on $K$. This will be the first requirement. Secondly, let $m$ be a natural number and $D$ an open polydisc in $\mathbb{C}^{m}$ (or more generally any open subset of $\mathbb{C}^m$). Let $\phi : D\to K\subset \mathbb{C}^{d}$ be a holomorphic mapping. Then, since $K\subset V_n \subset \mathbb{C}^d$ and $g_n$ is holomorphic on $V_n$, it follows that the composition $g_n\circ \phi: D\to\mathbb{C}$ is well defined and holomorphic on $D$. Moreover, $g_n\to f$ uniformly on $K$. It follows that $g_n\circ\phi\to f\circ \phi$ uniformly on $D$, where $f\circ\phi$ is a well defined function because $\phi (D)\subset K$ and $f$ is defined on $K$. Since $f\circ \phi$ is a uniform limit of holomorphic functions on the open set $D$, it follows that $f\circ \phi$ is holomorphic on $D$. This will be the second requirement.

In view of  Hartogs' Theorem on separate holomorphicity it suffices to restrict to the case $m=1$, for open discs $D$  in $\mathbb{C}$. Furthermore, we shall show that it is enough that our requirement be valid for mappings $\phi :D\to K\subset \mathbb{C}^d$ holomorphic and injective on open discs $D\subset\mathbb{C}$. Indeed, let $\phi : D\to K\subset \mathbb{C}^{d}$ be a holomorphic mapping, not necessarily injective. We must show that $f\circ\phi$ is holomorphic around every $z_0\in D$. We have $\phi(z)=(\phi _{1}(z), \dots, \phi _{d}(z))\in K\subset{\mathbb{C}^{d}}$, where $\phi _{i}(z)\in \mathbb{C}$. If there is $r>0$ such that $\phi$ restricted to the disc $D(z_{0},r)=\{z\in\mathbb{C}: |z-z_{0}|<r\}\subset D$ is injective, then, by our assumption concerning injective mappings, it follows that $f\circ \phi$ is holomorphic on 
$D(z_{0},r)$. Suppose that there is no $r>0$ such that $\phi=(\phi _{1},\dots, \phi _{d})$ is injective on $D(z_{0},r)$. Then $\phi _{1} '(z_{0})=\dots= \phi _{d} '(z_{0})=0$. If at least one of the functions $\phi _1 , \dots , \phi _{d}$, say  $\phi _{j}$, is non-constant, then $\phi _{j}'\not\equiv 0$, so its zero set $Z$ is a set of isolated points. Thus $\phi _{j}$ is locally injective on $D\setminus Z$ which implies that $\phi$ is locally injective on $D\setminus Z$. Thus, by our assumption, we conclude that $f\circ \phi$ is holomorphic on $D\setminus Z$. Since $f\circ \phi$ is continuous on $D$, it follows that it is holomorphic on $D$. If all functions $\phi _1 , \dots , \phi _{d}$ are constants, then $f\circ \phi$ is constant, hence holomorphic on $D$. 

These considerations suggest that we introduce the following definition.

\begin{defn}\label{A new}
	Let $K\subset\mathbb{C}^d$ be a compact set (or more generally a closed set). A function $f: K\to\mathbb{C}$ is said to belong to the class $A_{D}(K)$ if it is continuous on $K$ and, for every open disc $D\subset\mathbb{C}$ and every injective mapping $\phi : D\to K\subset\mathbb{C}^d$ holomorphic on $D$, the composition $f\circ \phi: D\to \mathbb{C}$ is holomorphic on $D$.
\end{defn}

Roughly speaking the above definition tells us that $f$ should be continuous on $K$ and holomorphic on every complex manifold contained in $K$. In fact, it is enough to restrict our attention to injective holomorphic mappings $\phi$ defined on the open unit disc. Moreover, one can check that in the above definition we may replace  the open discs $D$ by closed discs $\overline D$ and $\phi$ by  injective mappings that are continuous  on $\overline D $ and holomorphic on $D$.  

\begin{rem}
	Note that for compact sets $K\subset \mathbb C^{d}$ we have that $\overline{\mathcal O}(K)\subset A_{D}(K)\subset A(K)$ and  if $K$ is a planar compact set then $A_{D}(K)=A(K)$.
\end{rem}

In this paper we are mostly interested in products of planar compact sets. In this situation we have the following characterization of $A_{D}(K)$. 

\begin{prop}
	\label{M}
	Let $K_{i}\subset\mathbb{C}$ be compact sets, $i=1,2,\dots, d$. The following are equivalent:
	\begin{itemize}
		\item[(a)] $f\in A_{D}(K_{1}\times\cdots \times K_{d})$;
		\item[(b)]  $f\in C(K_{1}\times\cdots \times K_{d})$ and for each $i_{0}\in\{1,\dots,d\}$ and each $w_i\in K_i$, $i\in\{1,\dots,d\}\setminus\{i_{0}\}$, the function $K_{i_{0}}^{\circ}\ni z \mapsto f(z_{1}(z),\dots, z_d(z))\in\mathbb{C}$ is holomorphic on $K_{i_{0}}^{\circ}$, where $z_{i_{0}}(z)=z$ and   $z_{i}(z)=w_i$ for $i\in\{1,\dots,d\}\setminus\{i_{0}\}$.
	\end{itemize}
\end{prop}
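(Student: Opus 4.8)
The plan is to prove the two implications separately, with $(a)\Rightarrow(b)$ being essentially immediate and $(b)\Rightarrow(a)$ requiring the real work. For $(a)\Rightarrow(b)$: fix $i_0$ and the points $w_i\in K_i$ for $i\neq i_0$, and consider the map $\phi\colon K_{i_0}^\circ\to K$ sending $z$ to the point with $z_{i_0}$-coordinate equal to $z$ and all other coordinates equal to the fixed $w_i$. This $\phi$ is holomorphic (each coordinate is either the identity or a constant) and injective. On any open disc $D\subset K_{i_0}^\circ$ the restriction $\phi|_D$ is an injective holomorphic map $D\to K$, so by Definition \ref{A new} the composition $f\circ\phi|_D$ is holomorphic on $D$. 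Since holomorphy is local, $z\mapsto f(z_1(z),\dots,z_d(z))$ is holomorphic on all of $K_{i_0}^\circ$, which is exactly $(b)$.

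For $(b)\Rightarrow(a)$: assume $f$ is continuous and separately holomorphic in each variable on the interior slices as in $(b)$. We must show that for every open disc $D\subset\mathbb C$ and every injective holomorphic $\phi\colon D\to K$, the composition $f\circ\phi$ is holomorphic on $D$. Write $\phi=(\phi_1,\dots,\phi_d)$ with $\phi_i\colon D\to K_i$ holomorphic. The key observation is a dichotomy based on whether $\phi(D)$ is (locally) forced into the boundary of $K$: at a point $z_0\in D$, either the image $\phi$ lands in a genuine two-(real-)dimensional piece of $K$ — which, since $K=K_1\times\cdots\times K_d$, forces at least one coordinate $K_i$ to have nonempty interior near $\phi_i(z_0)$ and $\phi_i$ to be nonconstant there — or else the image is so constrained that the continuity of $f$ alone, together with the slicewise holomorphy hypothesis, suffices. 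The cleanest route: let $E\subset D$ be the (open) set of points $z_0$ such that some coordinate $\phi_i$ is nonconstant in a neighbourhood of $z_0$ (equivalently $\phi$ is nonconstant near $z_0$). On $D\setminus E$, $\phi$ is locally constant, so $f\circ\phi$ is locally constant there, hence trivially holomorphic; moreover $D\setminus E$ has empty interior unless $\phi$ is globally constant (since the zero set of a nonzero holomorphic function is discrete), so it suffices to prove $f\circ\phi$ holomorphic on $E$.

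On $E$, fix $z_0$ and a coordinate, say $\phi_j$, nonconstant near $z_0$. The main step is to show $\phi_j(z_0)\in K_j^\circ$, so that a full neighbourhood of $\phi_j(z_0)$ in $\mathbb C$ lies in $K_j$; this is where I expect the main obstacle to lie, and it uses injectivity of $\phi$ in an essential way. The point is that if $\phi$ is injective and $\phi_j$ is nonconstant, then near $z_0$ the map $\phi_j$ is an open map, so $\phi_j$ carries a neighbourhood of $z_0$ onto an open subset of $\mathbb C$; combined with $\phi_j(D)\subset K_j$ this shows the image is an open subset of $K_j$, forcing $\phi_j(z_0)\in K_j^\circ$ — but one must also rule out the degenerate scenario where $\phi_j$ is the only nonconstant coordinate yet $\phi$ is injective while $\phi(D)$ still sits in $\partial K$; injectivity handles exactly this, since an injective holomorphic curve into a product must have at least one coordinate moving through an open set. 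Once $\phi_j(z_0)\in K_j^\circ$, shrink $D$ to a small disc $D_0\ni z_0$ with $\phi_j(D_0)\subset K_j^\circ$. For each fixed $z\in D_0$, set $w_i=\phi_i(z)$ for $i\neq j$; hypothesis $(b)$ applied with $i_0=j$ says $\zeta\mapsto f(\dots,\phi_i(z)\text{ for }i\neq j,\ \zeta\text{ in slot }j,\dots)$ is holomorphic on $K_j^\circ$, in particular near $\phi_j(z)$. Now $f\circ\phi(z)$ is obtained from these slice functions by further composing with the holomorphic maps $\phi_i$; making this rigorous is the routine-but-fiddly part, handled by writing $f\circ\phi(z) = F(\phi_1(z),\dots,\phi_d(z))$ where on the relevant polydisc $F$ is separately holomorphic in the variable $z_j$ and continuous overall, invoking Hartogs (or just the one-variable chain rule after checking the Cauchy–Riemann equations via the slice holomorphy), to conclude $f\circ\phi$ is holomorphic on $D_0$. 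Since $z_0\in E$ was arbitrary, $f\circ\phi$ is holomorphic on $E$, hence on $D$ by continuity, completing the proof.
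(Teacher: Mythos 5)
Your $(a)\Rightarrow(b)$ direction is fine and matches the paper. The gap is in $(b)\Rightarrow(a)$, at the final step. After locating a single nonconstant coordinate $\phi_j$ and showing $\phi_j(z_0)\in K_j^\circ$, you propose to conclude by writing $f\circ\phi=F(\phi_1,\dots,\phi_d)$ with ``$F$ separately holomorphic in the variable $z_j$ and continuous overall, invoking Hartogs.'' Hartogs' theorem requires separate holomorphy in \emph{every} variable; a function holomorphic in one variable and merely continuous in the rest need not be holomorphic at all (e.g.\ $(z,w)\mapsto\overline{w}$), and the ``one-variable chain rule'' alternative is circular, since for fixed $z$ the slice map $\zeta\mapsto f(\dots,\phi_i(z),\dots,\zeta,\dots)$ tells you nothing about the $z$-dependence through the other nonconstant coordinates. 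Your argument only closes when $\phi_j$ is the \emph{sole} nonconstant coordinate; when two or more coordinates are nonconstant it does not.

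The repair is exactly what the paper does: partition the indices into $I=\{i:\phi_i(z_0)\in\partial K_i\}$ (for which the open mapping theorem forces $\phi_i\equiv c_i$) and $J$ its complement (for which $\phi_i(z_0)\in K_i^\circ$). Freeze the coordinates in $I$ at the constants $c_i$ to obtain $g:\prod_{i\in J}K_i\to\mathbb C$; hypothesis $(b)$, applied in \emph{each} variable $i_0\in J$ with the remaining variables fixed anywhere in $K_i$ --- crucially including the boundary points $c_i\in\partial K_i$ for $i\in I$, which is precisely why $(b)$ must allow $w_i\in\partial K_i$ --- gives separate holomorphy of $g$ on $\prod_{i\in J}K_i^\circ$, whence joint holomorphy by Hartogs, and then $f\circ\phi=g\circ\tilde\phi$ with $\tilde\phi=(\phi_i)_{i\in J}$ is holomorphic near $z_0$ by the several-variable chain rule. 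Two smaller remarks: your assertion that injectivity of $\phi$ is essential to the open-mapping step is mistaken (nonconstancy of $\phi_j$ alone makes $\phi_j$ open; injectivity plays no role in this direction), and your worry about $\phi(D)$ sitting in $\partial K$ is a red herring, since $\partial(K_1\times\cdots\times K_d)$ is not $\partial K_1\times\cdots\times\partial K_d$.
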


\begin{proof}
	It is straightforward  that $(a)$ implies $(b)$. For the converse, assume that $f$ is as in $(b$). Let $K=K_{1}\times\cdots\times K_{d}$. By Hartogs' Theorem $f$ is holomorphic on $K^{\circ}$. Consider $\phi : D\to K \subset \mathbb{C}^{d}$  an injective holomorphic mapping on an open disc $D\subset\mathbb{C}$, and let $z_{0}\in D$. If $\phi (z_{0})\in K^{\circ}$, then the composition $f\circ\phi$ is holomorphic on an open set containing $z_{0}$. If $\phi (z_{0})\in\partial K$, where $\phi=(\phi _{1}, \ldots, \phi _{d})$, then for at least one $i\in\{1,\dots , d\}$ we have $\phi _{i}(z_{0})\in\partial K_{i}$. The open mapping theorem for holomorphic functions implies that $\phi _{i}\equiv c_{i}$ for some constant $c_{i}\in\mathbb{C}$. Let $I$ denote the set of all these indices $i$ for which $\phi_{i}(z_{0})=c_{i}\in\partial K_i$ and let $J=\{1,\dots , d\}\setminus I$. Then $f(\phi _{1}(z),\dots , \phi _{d}(z))=g\circ\tilde{\phi}(z)$, where $\tilde{\phi}: D\to\prod _{i\in J} K_i$ is defined by $\tilde{\phi}=(\phi _{i})_{i\in J}$ and $g((w_i)_{i\in  J})=f((z_{i})_{i\in\{1,\dots,d\}})$, where $z_{i}=w_{i}$ for $i\in J$ and $z_{i}=c_{i}$ for $i\in I$. According to the assumption in ($b$)  on separate analyticity of $f$ and using Hartogs' Theorem, the function $g: \prod _{i\in J}K_i\to \mathbb{C}$ is holomorphic on the interior of $\prod _{i\in J}K_i$ and $\tilde{\phi}(z_{0})$ belongs to this set. Since $\tilde{\phi}$ is holomorphic it follows that $f\circ\phi =g\circ \tilde{\phi}$ is holomorphic around $z_{0}$, provided $J\neq \emptyset$. If $J=\emptyset$ then $\phi$ and $f\circ\phi$ are constants, and hence they  are holomorphic. This proves ($a$).
		
\end{proof}

Condition ($b$) in the above proposition is motivated by \cite{2013arXiv1304.5511M}, where the set of uniform limits of polynomials on a product of closed discs is characterized. We notice that in this condition it is essential that the points $w_i$ may belong to the boundary $\partial K_{i}$ and not necessarily to  $K_{i}^{\circ}$. It follows that the function $f$ in the Example \ref{counterexample1} does not belong to $A_D(K_1\times K_2)$. Thus there is hope that Question \ref{GG} could have an affirmative answer, if we replace $A(K_{1}\times\cdots \times K_{d})$ with the new algebra of functions $A_{D}(K_{1}\times\cdots \times K_{d})$ introduced in Definition  \ref{A new}. In fact we shall prove that this is true in Theorem \ref{mainresult}.

In connection  with \cite{FalcoNestoridis2017,FalcoNestoridis2018,MR0249639,Gauthier2014}, we notice the following.

\begin{cor}
	If $K_{i}\subset\mathbb{C}$ are planar compact  regular closed sets for $i=1,\ldots,d$, then
	\[
		A(K_{1}\times\cdots\times K_{d})=A_{D}(K_{1}\times\cdots\times K_{d}). 
	\]
\end{cor}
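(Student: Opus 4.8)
The plan is to deduce this corollary directly from Proposition~\ref{M}, since the inclusion $A_{D}(K_{1}\times\cdots\times K_{d})\subset A(K_{1}\times\cdots\times K_{d})$ is already noted in the remark following Definition~\ref{A new}; only the reverse inclusion requires argument. So let $K=K_{1}\times\cdots\times K_{d}$ with each $K_{i}$ regular closed, i.e. $\overline{K_{i}^{\circ}}=K_{i}$, and let $f\in A(K)$. By definition $f$ is continuous on $K$ and holomorphic on $K^{\circ}=K_{1}^{\circ}\times\cdots\times K_{d}^{\circ}$. We want to verify condition~(b) of Proposition~\ref{M}: for each $i_{0}$ and each choice of $w_{i}\in K_{i}$ ($i\neq i_{0}$), the slice function $z\mapsto f(w_{1},\ldots,w_{i_{0}-1},z,w_{i_{0}+1},\ldots,w_{d})$ is holomorphic on $K_{i_{0}}^{\circ}$.

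The key point, and where regularity enters, is that the $w_{i}$ in condition~(b) are allowed to lie on $\partial K_{i}$, so we cannot simply quote holomorphicity of $f$ on the open polydisc $K^{\circ}$. First I would fix $i_{0}$, the points $w_{i}\in K_{i}$ for $i\neq i_{0}$, and a point $a\in K_{i_{0}}^{\circ}$; I need to show the slice function is holomorphic near $a$. Choose a closed disc $\overline{\Delta}\subset K_{i_{0}}^{\circ}$ centered at $a$. For each $i\neq i_{0}$, since $K_{i}$ is regular closed, the point $w_{i}$ lies in $\overline{K_{i}^{\circ}}$, so there is a sequence $w_{i}^{(n)}\in K_{i}^{\circ}$ with $w_{i}^{(n)}\to w_{i}$. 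For each fixed $n$, the function $z\mapsto f(w_{1}^{(n)},\ldots,z,\ldots,w_{d}^{(n)})$ is the restriction of $f$ to a slice lying entirely in $K^{\circ}$, hence holomorphic on $K_{i_{0}}^{\circ}$, in particular on a neighbourhood of $\overline{\Delta}$. By uniform continuity of $f$ on the compact set $K$, these slice functions converge uniformly on $\overline{\Delta}$ (indeed uniformly on $K_{i_{0}}$) as $n\to\infty$ to the slice function $z\mapsto f(w_{1},\ldots,z,\ldots,w_{d})$. A uniform limit of holomorphic functions on a neighbourhood of $\overline{\Delta}$ is holomorphic on the interior, so the limiting slice function is holomorphic on the interior of $\overline{\Delta}$, in particular at $a$. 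Since $a\in K_{i_{0}}^{\circ}$ was arbitrary, the slice function is holomorphic on all of $K_{i_{0}}^{\circ}$, establishing condition~(b). By Proposition~\ref{M}, $f\in A_{D}(K)$, which completes the proof.

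I do not expect a serious obstacle here: the whole argument is a diagonal/approximation trick, and the only subtlety is making sure the approximating points $w_{i}^{(n)}$ can be taken in $K_{i}^{\circ}$ simultaneously for all $i\neq i_{0}$ — which is immediate since they are chosen independently in each coordinate — and that uniform convergence on the one-variable slices is legitimate, which follows from uniform continuity of $f$ on the compact product $K$. One should also note that Proposition~\ref{M} is stated for arbitrary compact $K_{i}$, so no extra hypothesis is needed to invoke it; regularity is used only to produce the sequences $w_{i}^{(n)}\in K_{i}^{\circ}$. An alternative, essentially equivalent phrasing would be to apply Morera's theorem directly to the limiting slice function, but the uniform-limit-of-holomorphic-functions formulation is cleaner and avoids any measurability bookkeeping.
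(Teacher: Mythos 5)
Your proposal is correct and follows essentially the same route as the paper: verify condition (b) of Proposition \ref{M} by using regularity to approximate each $w_{i}\in K_{i}$ by interior points, observe that the corresponding slice functions are holomorphic on $K_{i_{0}}^{\circ}$, and pass to the uniform limit via uniform continuity of $f$ on $K$. The extra detail you supply (the closed disc $\overline{\Delta}$ and the explicit uniform-limit argument) is a fleshed-out version of exactly what the paper does.
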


\begin{proof} Let $K=K_{1}\times\cdots\times K_{d}$. Obviously $A_{D}(K)$ is a subalgebra of $A(K)$. For the other inclusion, assume that  $f\in A(K)$. It suffices to show that $f$ satisfies condition $(b)$ in Proposition \ref{M}.  Every $w_i\in K_i$ can be approximated by a sequence $w_{i,n}\in K_{i}^{\circ}$, $n=1,2,\dots$ and the function $K_{i_{0}}^{\circ}\ni z \mapsto f(z_{1}(z),\dots, z_d(z))\in\mathbb{C}$ is a uniform limit of functions which are holomorphic on $K_{i_{0}}^{\circ}$. Here the points $w_i$ have been replaced by the points $w_{i,n}\in K_{i_{0}}^{\circ}$ where $z_{i_{0}}(z)=z$ and   $z_{i}(z)=w_i$ for $i\ne i_{0}$.
\end{proof}

We mention that an advantage of condition $(b)$ in  Proposition \ref{M} is that it can be naturally extended to infinite products, see Definition \ref{infiniteprod}. It suffices to require that the function $f: \prod _{i\in I} K_{i}\to\mathbb{C}$ be continuous when $\prod _{i\in I} K_{i}$ is endowed with the product topology and that $f$ be separately holomorphic on $K_{i_{0}}^{\circ}$ when all other variables are fixed in $K_{i}$ (even in $\partial K_{i}$).

\section{Mergelyan type theorems}
\label{Mergelyanresult}
In this section we prove two Mergelyan type theorems in $\mathbb C^{d}$ using the algebra $A_{D}(K)$. Their analogues in general fail for $A(K)$. A direct corollary of our first main result, Theorem \ref{mainresult}, provides an affirmative answer to Question \ref{GG}   for the algebra $A_{D}(K)$.  Theorem \ref{mainresult} is the analogue of \cite[Corollary 6.4]{MR0249639} for this new algebra. Our second main result, Theorem \ref{counterexample2},  deals with compact sets $K$ which are graphs of functions.

Before presenting our main theorems we need some auxiliary results. Let $K$ be a planar compact set and $W$ a Banach space. Let $C(K,W)$ denote the Banach space of continuous functions from $K$ to $W$, endowed with the supremum norm. We also denote by $W^{*}$ the dual space of $W$. Let $W$ be a closed subspace of $C(Y)$, where $Y$ is a compact space. It is natural to identify $C\big(K, C(Y)\big)$ with $C(K\times Y)$ via the bijection
\begin{equation}
	\label{identification}
	\begin{array}{ccl}
		C(K\times Y)   & \longrightarrow & C\big(K,C(Y)\big)    \\
		F(\cdot,\cdot) & \longmapsto     & x\mapsto F(x,\cdot). 
	\end{array}
\end{equation}
Therefore, $C(K,W)$ is identified with a  subspace of $C(K\times Y)$.

Let $A(K,W)$ be the subalgebra of $C(K,W)$ of functions holomorphic on $K^\circ$. Let $\overline{\mathcal{O}}(K,W)$ be the closure in $C(K,W)$ of the functions which extend holomorphically in a neighborhood of $K$. The following two results follow from \cite[Theorem 6.1]{MR0249639}.

\begin{thm}[{\cite{MR0249639}}]
	\label{thrmGG}
	Let $K$ be a planar compact set. If $f\in C(K,W)$, the following are equivalent:
	\begin{itemize}
		\item[(a)] $f\in \overline{\mathcal O}(K,W)$;
		\item[(b)] For each $L\in  W^*$ we have $L\circ f\in \overline{\mathcal O}(K)$.
	\end{itemize}
\end{thm}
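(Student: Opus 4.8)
\emph{Proof proposal.}
The implication $(a)\Rightarrow(b)$ is routine: if $g_n\colon V_n\to W$ is holomorphic on an open $V_n\supset K$ with $g_n\to f$ uniformly on $K$, then for each $L\in W^{*}$ the scalar function $L\circ g_n$ is holomorphic on $V_n$ (since $\tfrac{d}{dz}(L\circ g_n)=L\circ g_n'$), and $\|L\circ g_n-L\circ f\|_{C(K)}\le\|L\|\,\|g_n-f\|_{C(K,W)}\to0$; hence $L\circ f\in\overline{\mathcal O}(K)$. The content is $(b)\Rightarrow(a)$, and the plan is to first rewrite $\overline{\mathcal O}(K,W)$ as a tensor closure and then argue by duality.

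\emph{Stage 1 (reduction to elementary tensors).} For $g\in\mathcal O(K,W)$, holomorphic on $V_g\supset K$, pick a cycle $\gamma\subset V_g\setminus K$ winding around $K$; then $g(x)=\tfrac1{2\pi i}\int_\gamma g(\zeta)(\zeta-x)^{-1}\,d\zeta$ (a $W$-valued contour integral) for $x$ near $K$, and since $(\zeta,x)\mapsto g(\zeta)/(\zeta-x)$ is uniformly continuous on $\gamma\times K$ with values in $W$, the Riemann sums $\sum_k c_k\,g(\zeta_k)(\zeta_k-\cdot)^{-1}$ approximate $g$ uniformly on $K$; each lies in $r(K)\otimes W:=\{\sum_k r_k\,w_k:\ r_k\in r(K),\ w_k\in W\}$. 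With the trivial inclusion $r(K)\otimes W\subset\mathcal O(K,W)$ and the planar identity $\overline{r(K)}=R(K)=\overline{\mathcal O}(K)$, this gives
\[
\overline{\mathcal O}(K,W)=\overline{r(K)\otimes W}=\overline{R(K)\otimes W}\subset C(K,W).
\]
So $(b)\Rightarrow(a)$ is precisely the statement that an $f\in C(K,W)$ all of whose scalar slices $L\circ f$ ($L\in W^{*}$) lie in $\overline{r(K)}$ must itself lie in the closed span of $r(K)\otimes W$, i.e.\ the instance of \cite[Theorem 6.1]{MR0249639} for the function space $r(K)$ on the planar compactum $K$. At the level of citations this finishes the proof; the substance is reproving that ``slice principle''.

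\emph{Stage 2 (the slice principle, sketched).} Since $\overline{R(K)\otimes W}$ is closed, by Hahn--Banach it suffices to show that every $\Phi\in C(K,W)^{*}$ annihilating $r(K)\otimes W$ also annihilates $f$. Using the isometric embedding \eqref{identification}, extend $\Phi$ to $C(K\times Y)$ and represent it by a regular Borel measure $\nu$ on $K\times Y$; disintegrating $\nu$ over the projection $K\times Y\to K$ as $d\nu(x,y)=d\nu_x(y)\,d\mu(x)$ and setting $L_x:=\nu_x|_{W}\in W^{*}$, one gets $\Phi(F)=\int_K\langle L_x,F(x)\rangle\,d\mu(x)$. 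Testing on $r\otimes w$ with $r\in r(K)$ and $w\in W$ fixed shows that for every $w$ the scalar measure $\langle L_x,w\rangle\,d\mu(x)$ annihilates $r(K)$, hence $R(K)=\overline{\mathcal O}(K)$; meanwhile $\Phi(f)=\int_K\langle L_x,f(x)\rangle\,d\mu(x)$, so the goal is to make this last integral vanish.

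\emph{The main obstacle} is exactly this vanishing: we know $\langle L_x,w\rangle\,d\mu\perp R(K)$ for each \emph{fixed} $w$ and $L\circ f\in R(K)$ for each \emph{fixed} $L$, but $L_x$ varies with $x$, so neither fact applies directly. The route through is to approximate $f$ uniformly on $K$ by finite sums $\sum_j g_j\,w_j$ with $g_j\in R(K)$ and $w_j\in W$ \emph{built from the slices of $f$}, for then $\Phi\big(\sum_j g_j w_j\big)=\sum_j\int_K g_j(x)\langle L_x,w_j\rangle\,d\mu(x)=0$, and passing to the limit yields $\Phi(f)=0$. What makes this possible is that the slice operator $T\colon W^{*}\to C(K)$, $TL=L\circ f$, is \emph{compact} (its image of the unit ball is bounded and equicontinuous, by uniform continuity of $f$ on $K$, so Arzel\`a--Ascoli applies) and, by $(b)$, takes values in $R(K)$; approximating $T$ in operator norm by finite-rank maps into $R(K)$ with ``coefficient functionals'' in $W\subset W^{**}$ and unwinding the factorization produces exactly the desired elementary-tensor approximants of $f$. (When $W$ has a Schauder basis $(e_j)$ with coordinate functionals $(e_j^{*})$ this is transparent: $S_N f=\sum_{j\le N}(e_j^{*}\circ f)\,e_j$ works, since $e_j^{*}\circ f\in R(K)$ and $S_N\to\mathrm{id}$ uniformly on the compact set $f(K)$.) This compactness/approximation argument is the heart of \cite[Theorem 6.1]{MR0249639}, and with it in hand $(b)\Rightarrow(a)$, and hence the equivalence, is complete.
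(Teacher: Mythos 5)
Your direction $(a)\Rightarrow(b)$ and your Stage 1 are fine, and up to that point you are doing exactly what the paper does: the paper offers no proof of this statement at all, but simply quotes it as \cite[Theorem 6.1]{MR0249639}, so reducing the claim to that citation is consistent with the source. The problem is your Stage 2, which you yourself present as the heart of the matter.

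The gap is the final approximation step. You reduce $(b)\Rightarrow(a)$ to: the compact operator $T\colon W^{*}\to C(K)$, $TL=L\circ f$, which by $(b)$ has range in $R(K)$, can be approximated in operator norm by finite-rank operators $\sum_j g_j\otimes w_j$ with $g_j\in R(K)$ and $w_j\in W$. But that statement is \emph{equivalent} to the conclusion $f\in\overline{R(K)\otimes W}$; it is not a lemma you can invoke, it is the theorem restated in operator language. Compactness of $T$ alone does not produce such approximants: you would need either the approximation property of $R(K)$ (so that compact operators into $R(K)$ are limits of finite-rank operators landing in $R(K)$) or the bounded approximation property of $W$ (your Schauder-basis aside). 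Neither is available: $W$ is an arbitrary Banach space --- in the paper's application it is $\overline{\mathcal O}(K_2)=A_D(K_2)$, for which no basis or BAP is known --- and whether $R(K)$ has the approximation property for a general planar compactum is itself a notoriously hard question, not something the argument may presuppose. (The disintegration $d\nu(x,y)=d\nu_x(y)\,d\mu(x)$ over a general compact $Y$ is a secondary soft spot; the finite-rank step is the real failure.) The Gamelin--Garnett proof avoids this entirely: it runs the constructive Vitushkin localization scheme, whose approximants are built by explicit integral operators (Cauchy transforms of $\bar\partial$-derivatives against a partition of unity) with scalar kernels; these operators apply verbatim to $W$-valued functions, and the scalar estimates transfer by testing against functionals $L\in W^{*}$. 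That explicit, basis-free construction is precisely what substitutes for the approximation property, and it is the idea missing from your sketch.
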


\begin{cor}[{\cite[Corollary 6.2]{MR0249639}}]
	\label{cor52}
	Let $K\subset \mathbb C$ be compact. If $\overline{\mathcal O}(K)=A(K)$, then $\overline{\mathcal O}(K,W)=A(K,W)$ for all Banach spaces $W$.
\end{cor}

\begin{lem}
	\label{claim1}
	Let $K_1\subset\mathbb C$ and $K_2\subset\mathbb C^m$ be compact sets. Then with the identification in \eqref{identification} we have $\overline{\mathcal O}\big(K_1,\overline{\mathcal O}(K_2)\big)\subset \overline{\mathcal O}(K_1\times K_2).$
\end{lem}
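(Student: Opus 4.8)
The plan is to reduce the statement to a single generating function of $\overline{\mathcal O}\big(K_1,\overline{\mathcal O}(K_2)\big)$ and then to run the Cauchy-integral plus Riemann-sum argument already used in the proof of Theorem \ref{algebrasequivalences}, with the role of Runge's theorem in the $\mathbb C^m$-variables now played by the very definition of $\overline{\mathcal O}(K_2)$. Since the identification \eqref{identification} is isometric and $\overline{\mathcal O}(K_1\times K_2)$ is by definition a closed subspace of $C(K_1\times K_2)$, it is enough to show that every function $f\colon K_1\to\overline{\mathcal O}(K_2)$ which extends to a holomorphic map $\tilde f\colon U_1\to\overline{\mathcal O}(K_2)$ on some open set $U_1\supset K_1$ corresponds, under \eqref{identification}, to an element of $\overline{\mathcal O}(K_1\times K_2)$; the general case then follows by taking uniform limits.

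So I would fix such an $f$, let $F\in C(K_1\times K_2)$ be the associated scalar function, $F(x,y)=f(x)(y)$, and set $\tilde F(x,y)=\tilde f(x)(y)$ on $U_1\times K_2$. Two elementary observations are needed: $\tilde F$ is jointly continuous on $U_1\times K_2$, because $\tilde f$ is continuous into $C(K_2)$; and for each fixed $y\in K_2$ the function $x\mapsto\tilde F(x,y)$ is holomorphic on $U_1$, being the composition of the holomorphic $\overline{\mathcal O}(K_2)$-valued map $\tilde f$ with the bounded linear functional ``evaluation at $y$''. In particular no vector-valued integration is required.

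Next I would pick a cycle $\gamma\subset U_1\setminus K_1$, a finite sum of closed polygons, with winding number $1$ about every point of $K_1$, so that the scalar Cauchy formula gives
\[
	F(x,y)=\frac{1}{2\pi i}\int_{\gamma}\frac{\tilde F(\zeta,y)}{\zeta-x}\,d\zeta
\]
for all $(x,y)\in K_1\times K_2$. Since $(\zeta,x,y)\mapsto\tilde F(\zeta,y)/(\zeta-x)$ is uniformly continuous on the compact set $\gamma\times K_1\times K_2$ and $\gamma$ has finite length, this integral is approximated, uniformly in $(x,y)\in K_1\times K_2$ and exactly as in the proof of Theorem \ref{algebrasequivalences}, by Riemann sums $\sum_{k=1}^{N}c_k\,\tilde F(\zeta_k,y)/(\zeta_k-x)$ with $\zeta_k\in\gamma$. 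For each $k$ the slice $y\mapsto\tilde F(\zeta_k,y)=\tilde f(\zeta_k)(y)$ lies in $\overline{\mathcal O}(K_2)$, hence is uniformly approximable on $K_2$ by a function $h_k$ holomorphic on an open neighbourhood $V_k\subset\mathbb C^m$ of $K_2$; choosing these approximations fine enough (using $|\zeta_k-x|\ge\mathrm{dist}(\gamma,K_1)>0$ on $K_1$) one replaces the Riemann sum, up to an arbitrarily small error on $K_1\times K_2$, by $G(x,y)=\sum_{k=1}^{N}c_k\,h_k(y)/(\zeta_k-x)$. Each summand of $G$ is holomorphic on $(\mathbb C\setminus\{\zeta_k\})\times V_k$, an open subset of $\mathbb C^{1+m}$ containing $K_1\times K_2$ since $\zeta_k\notin K_1$, so $G\in\mathcal O(K_1\times K_2)$; letting the errors tend to zero yields $F\in\overline{\mathcal O}(K_1\times K_2)$.

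The argument is largely bookkeeping; the only points that deserve care are the translation between the $\overline{\mathcal O}(K_2)$-valued and the scalar pictures (continuity and separate holomorphy of $\tilde F$, and the reduction to generating functions) and the combined uniformity of the Riemann-sum and slice approximations. The construction of the cycle $\gamma$ and the Cauchy formula are standard and are used in the same way in the proof of Theorem \ref{algebrasequivalences}, so I do not expect a genuine obstacle.
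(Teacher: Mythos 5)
Your proposal is correct and follows essentially the same route as the paper: reduce to functions extending holomorphically to a neighbourhood of $K_1$, apply the Cauchy formula on a cycle $\gamma\subset V_f\setminus K_1$, approximate the integral by Riemann sums, and replace each coefficient $\tilde f(\zeta_k)\in\overline{\mathcal O}(K_2)$ by a function holomorphic near $K_2$. The only cosmetic difference is that you phrase the Cauchy/Riemann-sum step scalarly on $\gamma\times K_1\times K_2$ while the paper works with the $\overline{\mathcal O}(K_2)$-valued function on $\gamma\times K_1$; these are equivalent.
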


\begin{proof}
	Let $f\in \overline{\mathcal O}\big(K_1,\overline{\mathcal O}(K_2)\big)$ and $\varepsilon>0$. There is an open neighbourhood $V_{f}$ of $K_1$ and a holomorphic function $\tilde f:V_{f}\to \overline{\mathcal O}(K_2)$ such that 
	\[
		\Vert  f(z) - \tilde f(z)\Vert_{K_2}<\varepsilon/3
	\] for every $z\in K_{1}$. Then, there is a cycle $\gamma$ in $V_{f}\setminus K_1$ for which we have by the Cauchy formula:
	\[
		\tilde f(z)=\frac{1}{2\pi i}\int_\gamma \frac{\tilde f(\zeta)}{\zeta-z}\, d\zeta,
	\]
	for all $z\in K_1$.
		
	We observe that the function $(\zeta,z)\mapsto \frac{\tilde f(\zeta)}{\zeta-z}$ defined on $\gamma\times K_1$ is uniformly continuous, as it is a continuous function defined on a compact set, so we can approximate   the integral uniformly on $K_1$ by its Riemann sums
	\[
		\Big\Vert \frac{1}{2\pi i}\int_\gamma \frac{\tilde f(\zeta)}{\zeta-z}\, d\zeta - \sum_{l=1}^M a_l\frac{\tilde f(\zeta_l)}{\zeta_l-z}\Big\Vert_{K_{2}}<\varepsilon/3,
	\]
	for every $z\in K_{1}$, where $a_{l}\in\mathbb C$, $\zeta_{l}\in \gamma$ and $f(\zeta_{l}) \in\overline{\mathcal O}(K_{2})$ for $l = 1,\ldots ,M$. Thus, for each $l=1,\ldots, M$ there is a neighbourhood $U_l$ of $K_2$ and a holomorphic  function $h_l:U_l\to \mathbb C$ such that $\Vert h_l-\tilde f(\zeta_l)\Vert_{K_2}<\varepsilon_l$.  We may choose $\varepsilon_l$ sufficiently small such that
	\[
		\Big\Vert \sum_{l=1}^M a_l\frac{\tilde f(\zeta_l)}{\zeta_l-z}  -  \sum_{l=1}^M a_l\frac{h_l}{\zeta_l-z}\Big\Vert_{K_{2}} < \varepsilon/3
	\]
	for every $z\in K_{1}$, and so, by the triangle inequality,
	\[
		\Big\Vert f(z) - \sum_{l=1}^M a_l\frac{h_l}{\zeta_l-z}\Big\Vert_{K_{2}}
		< \varepsilon.
	\]
	for every $z\in K_{1}$. Set $U=U_1\cap\cdots\cap U_{d}$ which is also an open neighbourhood of $K_2$.
		
	Now, notice that the function $(z,w)\mapsto \sum_{l=1}^M a_l\frac{h_l(w)}{\zeta_l-z}$
	is holomorphic on $V_{f}\times U$ which is a neighbourhood of $K_1\times K_2$, and yields the desired approximation.
\end{proof}

\begin{lem}
	\label{claim2}Let $K_1\subset \mathbb C$ and $K_2\subset\mathbb C^m$ be compact sets. Then with the identification in \eqref{identification} we have  $A_D(K_1\times K_2)\subset A\big(K_1,A_{D}(K_2)\big)$.
\end{lem}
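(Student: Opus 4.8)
The goal is to show that if $f \in A_D(K_1 \times K_2)$, then, under the identification \eqref{identification}, the map $z \mapsto f(z, \cdot)$ belongs to $A\big(K_1, A_D(K_2)\big)$. The plan is to verify three things: first, that $f(z,\cdot)$ actually lands in $A_D(K_2)$ for each fixed $z \in K_1$; second, that the resulting map $K_1 \to A_D(K_2)$ is continuous; and third, that it is holomorphic on $K_1^\circ$.

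First I would check that $f(z, \cdot) \in A_D(K_2)$ for each fixed $z_1 = z \in K_1$. Continuity of $f(z,\cdot)$ on $K_2$ is immediate from continuity of $f$. For the holomorphy-on-manifolds condition, take any open disc $D \subset \mathbb{C}$ and any injective holomorphic map $\psi : D \to K_2 \subset \mathbb{C}^m$; then $\Phi(\zeta) := (z, \psi(\zeta))$ is a holomorphic map $D \to K_1 \times K_2$. It need not be injective only if $\psi$ fails to be, but $\psi$ is injective by hypothesis, so $\Phi$ is injective, and since $f \in A_D(K_1 \times K_2)$ we get that $f \circ \Phi = f(z, \psi(\cdot))$ is holomorphic on $D$. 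Hence $f(z,\cdot) \in A_D(K_2)$, so the assignment $z \mapsto f(z,\cdot)$ is a well-defined map $K_1 \to A_D(K_2)$.

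Next, continuity of $z \mapsto f(z,\cdot)$ as a map into the Banach space $A_D(K_2)$ (with the sup norm) follows from uniform continuity of $f$ on the compact set $K_1 \times K_2$: given $\varepsilon > 0$ there is $\delta > 0$ so that $|z - z'| < \delta$ implies $|f(z,w) - f(z',w)| < \varepsilon$ for all $w \in K_2$, i.e. $\|f(z,\cdot) - f(z',\cdot)\|_{K_2} \le \varepsilon$. So the map is (uniformly) continuous, hence lies in $C\big(K_1, A_D(K_2)\big)$.

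The main step — and the part I expect to require the most care — is holomorphy of $z \mapsto f(z,\cdot)$ on $K_1^\circ$ as a vector-valued function. For a map from a planar open set into a Banach space, holomorphy can be checked weakly (by Dunford's theorem): it suffices to show $L \circ \big(z \mapsto f(z,\cdot)\big)$ is holomorphic on $K_1^\circ$ for every $L \in A_D(K_2)^*$, together with local boundedness (which follows from continuity). Alternatively, and perhaps more self-containedly, I would verify the Cauchy–Riemann condition directly: fix $z_0 \in K_1^\circ$ and a small closed disc $\overline{D}(z_0, r) \subset K_1^\circ$; for each fixed $w \in K_2$, the scalar function $z \mapsto f(z,w)$ is holomorphic on $K_1^\circ$ (this is the case $i_0 = 1$ of condition (b) in Proposition \ref{M} — or rather, for $m > 1$, one applies the definition of $A_D$ to the injective holomorphic map $\zeta \mapsto (\zeta, w)$ when $w \in K_2$, valid since the disc $D(z_0,r)$ embeds), so $f(z,w) = \frac{1}{2\pi i}\int_{|\zeta - z_0| = r} \frac{f(\zeta, w)}{\zeta - z}\, d\zeta$ for $|z - z_0| < r$. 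Since $\zeta \mapsto f(\zeta, \cdot)$ is a continuous $A_D(K_2)$-valued function on the circle, this Cauchy integral converges as a Banach-space-valued (Bochner or Riemann) integral, exhibiting $z \mapsto f(z, \cdot)$ as a uniform-on-compacta limit of its Riemann sums, each of which is a rational (hence holomorphic) $A_D(K_2)$-valued function of $z$; therefore $z \mapsto f(z,\cdot)$ is holomorphic on $K_1^\circ$. Combining the three points gives $f \in A\big(K_1, A_D(K_2)\big)$, completing the proof. The subtle point to get right is ensuring, when $m > 1$, that "fix $w$ and vary the first coordinate" genuinely produces an injective holomorphic disc in $K_1 \times K_2$ so that the defining property of $A_D$ applies — which it does, since $\zeta \mapsto (\zeta, w)$ is visibly injective.
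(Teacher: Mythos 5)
Your proof is correct and follows essentially the same route as the paper's: the well-definedness and continuity steps are identical, and your first option for the holomorphy step (testing against functionals $L\in A_D(K_2)^*$, extending by Hahn--Banach, representing by a measure via Riesz, and approximating the integral by Riemann sums $\sum_l c_l f(\cdot,w_l)$, each holomorphic on $K_1^\circ$ because $\zeta\mapsto(\zeta,w_l)$ is an injective holomorphic disc in $K_1\times K_2$) is exactly the paper's argument. Your alternative vector-valued Cauchy-integral route is also valid, but the paper does not need it.
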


\begin{proof}
	We need to show that the   identification \eqref{identification} maps $A_{D}(K_1\times K_2)$ to $ A\big(K_1,A_{D}(K_{2})\big)$, that is the mapping
	\begin{equation*}
		\begin{array}{ccl}
			A_{D}(K_1\times K_2) & \longrightarrow & A\big(K_1,A_{D}(K_{2})\big) \\
			F(\cdot,\cdot)       & \longmapsto     & x\mapsto F(x,\cdot)         
		\end{array}
	\end{equation*}
	is well defined.

	We need to show that:
	\begin{itemize}
		\item[(1)] $F(x,\cdot)\in C(K_2)$ for all $x\in K_1$;
		\item[(2)] For every  $x\in K_1$ and $\Phi:D\to K_2$ holomorphic and injective on a disc $D$ in $\mathbb C$ we have that $F(x,\cdot)\circ \Phi$ is holomorphic on $ D$;
		\item[(3)] The mapping $x\mapsto F(x,\cdot)$ belongs to $C\big(K_1,A_D(K_2)\big)$ and is holomorphic on $K_1^\circ$.
	\end{itemize}
		
	Part $(1)$ follows trivially since $F\in C(K_1\times K_2)$.
	For part $(2)$ consider $\Phi:D \to K_2$ holomorphic and injective on an open disc $D$ in $\mathbb C$ and, for each $x\in K_{1}$, the function $\Phi_x:D\to K_1\times K_2$ defined by $\Phi_x(z)=(x,\Phi(z)).$ Since $\Phi_x$ is holomorphic and injective on $D$, and since $F\in A_D(K_1\times K_2)$, we get that $F\circ \Phi_x$ is holomorphic on $D$ for each $x$ in $K_{1}$, by Definition \ref{A new}. We have $(F\circ \Phi_x)(z)=F(x,\Phi(z))=(F(x,\cdot)\circ\Phi)(z)$, which shows that $F(x,\cdot)\circ\Phi$ is holomorphic on $D$ as desired. Thus, $F(x,\cdot)$ belongs to $A_D(K_2)$.
		
	Now we prove $(3)$. The continuity follows  because $F$ is continuous on $K_1\times K_2$. We need to show that for all $L\in A_D(K_2)^*$ we have that 
	\begin{equation*}
		\begin{array}{rcccl}
			K_{1} & \longrightarrow & A_{D}(K_{2}) & \longrightarrow & \mathbb C             \\
			x     & \mapsto         & F(x,\cdot)   & \mapsto         & L\big(F(x,\cdot)\big) 
		\end{array}
	\end{equation*}
	is holomorphic on $K_{1}^{\circ}$.

	By the Hahn-Banach Theorem, the continuous functional $L$ can be extended to $C(K_2)$ and by the Riesz Representation Theorem there is a finite complex measure $\mu$ depending only on $L$ and supported on $K_2$ such that, for all $g\in A_{D}(K_2)$,
	\[
		L(g)=\int_{K_2}g(w)\, d\mu(w).
	\]
	In particular, we have $L\big(F(x,\cdot)\big)=\int_{K_2}F(x,w)\, d\mu(w)$ for each $x\in K_1$. Since $F$ is uniformly continuous on $K_1\times K_2$, the last integral can be approximated uniformly on $K_1$ by ``Riemann sums'' $\sum_{l=1}^M c_l F(x,w_l)$, where $M$ is a natural number, $w_{l}\in K_{2}$ and $c_l\in \mathbb C$ for $l=1,\ldots,M$.
		
	The function $K_1^\circ\ni x\mapsto c_lF(x,w_l)$ is holomorphic on $K_1^\circ$ because $F\in A_D(K_1\times K_2)$. It follows that the function $K_1^\circ\ni x\mapsto \sum_{l=1}^M c_l F(x,w_l)$ is holomorphic on $K_1^\circ$. Therefore,  on $K_1^\circ$, the function $x\mapsto L\big(F(x,\cdot)\big)$, which is the uniform limit of these functions, is also holomorphic. This proves that $x\mapsto F(x,\cdot)$ is holomorphic on $K_{1}^{\circ}$, which completes the proof.
\end{proof}
\begin{rem}
	We note that the previous lemma does not hold if we replace the algebras $A_{D}$ by the algebra $A$ (see Example \ref{counterexample1}).
\end{rem}

\begin{thm}
	\label{mainresult}Let $K_1\subset \mathbb C$, $K_2\subset \mathbb C^m$ be compact sets such that $\overline{\mathcal{O}}(K_1)=A_D(K_1)$ and $\overline{\mathcal{O}}(K_2)=A_D(K_2)$. Then 
	\[
		\overline{\mathcal{O}}(K_1\times K_2)=A_D(K_1\times K_2).
	\]
\end{thm}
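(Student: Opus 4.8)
The plan is to deduce the theorem from the chain of inclusions established in the two preceding lemmas, combined with Corollary \ref{cor52} (the vector-valued Mergelyan-type transfer principle) and Theorem \ref{thrmGG} (the functional characterization of $\overline{\mathcal O}(K,W)$). Throughout we use the identification \eqref{identification} of $C(K_1\times K_2)$ with $C(K_1,C(K_2))$, which restricts to identifications of the various subspaces. Since $\overline{\mathcal O}(K_1\times K_2)\subset A_D(K_1\times K_2)$ always holds, the content is the reverse inclusion $A_D(K_1\times K_2)\subset \overline{\mathcal O}(K_1\times K_2)$.

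First I would take $f\in A_D(K_1\times K_2)$. By Lemma \ref{claim2}, under the identification \eqref{identification} we have $f\in A\bigl(K_1,A_D(K_2)\bigr)$. Now I would apply the hypothesis $\overline{\mathcal O}(K_2)=A_D(K_2)$: this means the Banach space $W:=A_D(K_2)=\overline{\mathcal O}(K_2)$ is exactly the closure in $C(K_2)$ of functions holomorphic near $K_2$, so the symbols $A(K_1,W)$ and $\overline{\mathcal O}(K_1,W)$ from the setup before Theorem \ref{thrmGG} make sense with this $W$. The crucial input is Corollary \ref{cor52}: since $\overline{\mathcal O}(K_1)=A_D(K_1)$, and here again I must note that $A_D(K_1)=A(K_1)$ because $K_1$ is a planar compact set, we have $\overline{\mathcal O}(K_1)=A(K_1)$, hence by Corollary \ref{cor52}, $\overline{\mathcal O}(K_1,W)=A(K_1,W)$ for \emph{every} Banach space $W$, in particular for $W=A_D(K_2)$. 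Therefore $f\in A(K_1,A_D(K_2))=\overline{\mathcal O}(K_1,A_D(K_2))=\overline{\mathcal O}\bigl(K_1,\overline{\mathcal O}(K_2)\bigr)$.

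Finally, Lemma \ref{claim1} gives $\overline{\mathcal O}\bigl(K_1,\overline{\mathcal O}(K_2)\bigr)\subset \overline{\mathcal O}(K_1\times K_2)$, so $f\in\overline{\mathcal O}(K_1\times K_2)$, which is the inclusion we wanted. Combining with the trivial inclusion $\overline{\mathcal O}(K_1\times K_2)\subset A_D(K_1\times K_2)$ yields the claimed equality. To make the chain airtight one should check that the identification \eqref{identification} is genuinely compatible with all the subspace structures invoked — i.e. that it carries $A_D(K_1\times K_2)$ into $A(K_1,A_D(K_2))$ (Lemma \ref{claim2}), carries $\overline{\mathcal O}(K_1,\overline{\mathcal O}(K_2))$ into $\overline{\mathcal O}(K_1\times K_2)$ (Lemma \ref{claim1}), and that $A(K_1,W)$ and $\overline{\mathcal O}(K_1,W)$ as defined really do satisfy Corollary \ref{cor52} with $W=A_D(K_2)$; these are exactly the statements already proved, so the argument is essentially a diagram chase.

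The main (and really the only) obstacle is conceptual bookkeeping rather than a hard estimate: one must verify that applying Corollary \ref{cor52} with the specific Banach space $W=A_D(K_2)$ is legitimate — i.e. that $A_D(K_2)$, as a uniform closed subalgebra of $C(K_2)$, is a bona fide Banach space to which the cited vector-valued result applies, and that the hypothesis of that corollary, namely $\overline{\mathcal O}(K_1)=A(K_1)$, is available here. The latter requires the observation $A_D(K_1)=A(K_1)$ for planar $K_1$ (stated in the remark after Definition \ref{A new}) together with the standing hypothesis $\overline{\mathcal O}(K_1)=A_D(K_1)$. Once these identifications are in place, no further work is needed; the theorem follows by concatenating Lemma \ref{claim2}, Corollary \ref{cor52}, and Lemma \ref{claim1}.
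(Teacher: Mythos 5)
Your proposal is correct and follows essentially the same route as the paper's own proof: Lemma \ref{claim2} to land in $A\bigl(K_1,A_D(K_2)\bigr)$, then Corollary \ref{cor52} with $W=\overline{\mathcal O}(K_2)$ (using $A(K_1)=A_D(K_1)=\overline{\mathcal O}(K_1)$ for planar $K_1$), then Lemma \ref{claim1} to return to $\overline{\mathcal O}(K_1\times K_2)$. Your explicit remark that $A_D(K_1)=A(K_1)$ for planar compacta is exactly the bookkeeping point the paper uses implicitly, so nothing is missing.
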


\begin{proof}
	By Lemma \ref{claim2} and the assumption that $A_D(K_2)=\overline{\mathcal{O}}(K_2)$ we have
	\begin{equation}
		\label{cor57claim1}
		A_{D}(K_1\times K_2)\subset A(K_1,A_D(K_2))=A(K_1,\overline{\mathcal{O}}(K_2)).
	\end{equation} 
	Since $A(K_{1})=A_D(K_1)=\overline{\mathcal{O}}(K_1)$ by applying Corollary \ref{cor52} with $W=\overline{\mathcal{O}}(K_2)$, we get that 
	\begin{equation}
		\label{cor57claim2}
		A(K_1,\overline{\mathcal{O}}(K_2))=\overline{\mathcal{O}}(K_1,\overline{\mathcal{O}}(K_2)).
	\end{equation}
	Finally, by Lemma \ref{claim1}, we conclude that 
	\begin{equation}
		\label{cor57claim3}
		\overline{\mathcal{O}}(K_1,\overline{\mathcal{O}}(K_2))\subset \overline{\mathcal{O}}(K_{1}\times K_2).
	\end{equation}
	By combining equations \eqref{cor57claim1}, \eqref{cor57claim2} and \eqref{cor57claim3} we obtain that $A_{D}(K_1\times K_2)\subset \overline{\mathcal{O}}(K_1\times K_2)$ and since $\overline{\mathcal{O}}(K_1\times K_2)\subset A_D(K_1\times K_2)$ the desired result follows.
\end{proof}

An immediate consequence of the previous theorem is the following corollary which gives a positive answer to Question \ref{GG} provided we use the algebra $A_{D}(K_{1}\times\cdots\times K_{d})$ instead of $A(K_{1}\times\cdots\times K_{d})$.
\begin{cor}
	\label{answerquestion}
	Let $K_i\subset \mathbb C$, $i=1,\ldots,d$, be compact sets such that $\overline{\mathcal{O}}(K_i)=A_D(K_i)$. Then
	\[
		R_{L}(K_1\times \cdots \times K_{d})=R(K_1\times \cdots \times K_{d})=\overline{\mathcal{O}}(K_1\times \cdots \times K_{d})=A_D(K_1\times \cdots \times K_{d}).
	\]
\end{cor}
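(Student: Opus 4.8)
The plan is to obtain Corollary \ref{answerquestion} as a straightforward induction on $d$, using Theorem \ref{mainresult} as the inductive engine and Theorem \ref{algebrasequivalences} to collapse the chain of intermediate algebras. Note first that the chain of inclusions
\[
	R_{L}(K_1\times\cdots\times K_d)\subset R(K_1\times\cdots\times K_d)\subset \overline{\mathcal O}(K_1\times\cdots\times K_d)\subset A_D(K_1\times\cdots\times K_d)
\]
always holds; Theorem \ref{algebrasequivalences} already gives the first two equalities for any product of planar compact sets, so the entire content of the corollary is the single equality $\overline{\mathcal O}(K_1\times\cdots\times K_d)=A_D(K_1\times\cdots\times K_d)$, and for that it suffices to prove $A_D(K_1\times\cdots\times K_d)\subset \overline{\mathcal O}(K_1\times\cdots\times K_d)$.

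The induction runs as follows. For $d=1$ the hypothesis is exactly $\overline{\mathcal O}(K_1)=A_D(K_1)$. Assume the statement for $d-1$, so that $\overline{\mathcal O}(K_2\times\cdots\times K_d)=A_D(K_2\times\cdots\times K_d)$, where $K_2\times\cdots\times K_d\subset\mathbb C^{d-1}$ is the relevant compact set; by the inductive hypothesis this common algebra also equals $R_L$ and $R$ of that product. Now apply Theorem \ref{mainresult} with the roles "$K_1$" $\leftarrow K_1\subset\mathbb C$ and "$K_2$" $\leftarrow K_2\times\cdots\times K_d\subset\mathbb C^{m}$ with $m=d-1$: the two hypotheses $\overline{\mathcal O}(K_1)=A_D(K_1)$ and $\overline{\mathcal O}(K_2\times\cdots\times K_d)=A_D(K_2\times\cdots\times K_d)$ are met (the first by assumption, the second by the inductive step), so the theorem yields $\overline{\mathcal O}\big(K_1\times(K_2\times\cdots\times K_d)\big)=A_D\big(K_1\times(K_2\times\cdots\times K_d)\big)$, which is precisely $\overline{\mathcal O}(K_1\times\cdots\times K_d)=A_D(K_1\times\cdots\times K_d)$.

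Finally, since $K_1\times\cdots\times K_d$ is a product of planar compact sets, Theorem \ref{algebrasequivalences} gives $R_L(K_1\times\cdots\times K_d)=R(K_1\times\cdots\times K_d)=\overline{\mathcal O}(K_1\times\cdots\times K_d)$, and stringing this together with the equality just obtained produces the full four-term chain claimed in the corollary.

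I do not expect a genuine obstacle here, since all the analytic work has been done in Theorem \ref{mainresult} and Theorem \ref{algebrasequivalences}; the only point requiring a line of care is the bookkeeping about ambient dimensions — one must identify $K_2\times\cdots\times K_d$ with a compact subset of $\mathbb C^{d-1}$ and check that Theorem \ref{mainresult}, stated for $K_2\subset\mathbb C^m$ with arbitrary $m$, applies verbatim — together with the observation that associativity of the Cartesian product lets one peel off one planar factor at a time rather than having to treat all $d$ factors simultaneously.
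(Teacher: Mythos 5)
Your proof is correct and is exactly the argument the paper intends: the paper states the corollary as an immediate consequence of Theorem \ref{mainresult}, and the underlying mechanism is precisely your induction on $d$, peeling off one planar factor at a time and invoking Theorem \ref{algebrasequivalences} to supply the equalities $R_L=R=\overline{\mathcal O}$. Your remark about identifying $K_2\times\cdots\times K_d$ with a compact subset of $\mathbb C^{d-1}$ is the only bookkeeping point, and you handle it correctly.
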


We now  give our second main result  where $K$ is not necessarily a product.

\begin{thm}\label{counterexample2}
	Let $K_{1}\subset\mathbb{C}^m$ be a compact set such that $A_{D}(K_1)=\overline{\mathcal O}(K_{1})$ and $\omega  \in A_{D}(K_1)$.  Let $K$ be the graph of the function $\omega$; that is, 
	\[
		K=\{(z,w)\in \mathbb{C}^{m+1}: z\in K_{1}, w=\omega (z) \}.
	\]
	Then $A_{D}(K)=\overline{\mathcal O}(K)$.
\end{thm}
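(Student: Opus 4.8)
The inclusion $\overline{\mathcal O}(K)\subset A_D(K)$ holds for every compact set, as noted in the remark following Definition \ref{A new}, so the entire content of the theorem is the reverse inclusion $A_D(K)\subset\overline{\mathcal O}(K)$. The plan is to transfer the problem from the graph $K\subset\mathbb C^{m+1}$ back to $K_1\subset\mathbb C^m$, where the hypothesis $A_D(K_1)=\overline{\mathcal O}(K_1)$ is available. To this end I would introduce the graph map $\iota\colon K_1\to K$, $\iota(z)=(z,\omega(z))$, which is a homeomorphism with inverse the coordinate projection $\pi(z,w)=z$ since $\omega$ is continuous. Given $f\in A_D(K)$, set $g=f\circ\iota\colon K_1\to\mathbb C$.

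The crux is the claim that $g\in A_D(K_1)$. Continuity of $g$ is immediate from continuity of $f$ and of $\iota$. So let $D\subset\mathbb C$ be an open disc and $\Phi\colon D\to K_1$ an injective holomorphic mapping; one must show $g\circ\Phi$ is holomorphic on $D$. Consider $\Psi=\iota\circ\Phi\colon D\to K$, that is $\Psi(\zeta)=\bigl(\Phi(\zeta),\omega(\Phi(\zeta))\bigr)$. Its first $m$ coordinates are those of $\Phi$, hence holomorphic, while its last coordinate is $\omega\circ\Phi$, which is holomorphic precisely because $\omega\in A_D(K_1)$ and $\Phi$ is an injective holomorphic mapping into $K_1$; thus $\Psi$ is holomorphic. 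Moreover $\Psi$ is injective, since its first coordinate $\Phi$ already is. As $f\in A_D(K)$, Definition \ref{A new} gives that $f\circ\Psi$ is holomorphic on $D$, and $f\circ\Psi=f\circ\iota\circ\Phi=g\circ\Phi$. This establishes $g\in A_D(K_1)$.

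It then follows from the hypothesis $A_D(K_1)=\overline{\mathcal O}(K_1)$ that $g\in\overline{\mathcal O}(K_1)$, so there is a sequence $g_n$, each holomorphic on an open set $V_n$ with $K_1\subset V_n\subset\mathbb C^m$, with $g_n\to g$ uniformly on $K_1$. I would then set $G_n\colon V_n\times\mathbb C\to\mathbb C$, $G_n(z,w)=g_n(z)$. Each $G_n$ is holomorphic on the open neighbourhood $V_n\times\mathbb C$ of $K$, and for $(z,w)\in K$ one has $w=\omega(z)$, so $G_n(z,w)=g_n(z)$ whereas $f(z,w)=f(z,\omega(z))=g(z)$. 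Hence $\sup_{(z,w)\in K}|G_n(z,w)-f(z,w)|=\sup_{z\in K_1}|g_n(z)-g(z)|\to 0$, which shows $f\in\overline{\mathcal O}(K)$ and finishes the argument.

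I expect no serious technical obstacle: the only nontrivial point is the claim $g\in A_D(K_1)$, and there the essential input is the hypothesis $\omega\in A_D(K_1)$, which is exactly what guarantees that an arbitrary disc immersed in $K_1$ lifts through the graph to a disc immersed in $K$. If that hypothesis were dropped, the lifted map $\Psi$ need not be holomorphic and the reduction to $K_1$ would break down; the regularity of $\omega$ plays no role in the final approximation step, since there $\omega$ enters only through the identity $w=\omega(z)$ on $K$.
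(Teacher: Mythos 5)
Your proposal is correct and follows essentially the same route as the paper's proof: the graph map $\iota(z)=(z,\omega(z))$ is the paper's $\Psi$, the lifting argument showing $f\circ\iota\in A_D(K_1)$ (using $\omega\in A_D(K_1)$ for holomorphy and the first coordinate for injectivity) is identical, and the final step of extending an approximant $q$ of $f\circ\iota$ to $Q(z,w)=q(z)$ on a neighbourhood of $K$ matches the paper exactly.
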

\begin{proof}
	It suffices to show $A_{D}(K)\subset\overline{\mathcal O}(K)$ since the other inclusion follows trivially. Let $f\in A_{D}(K)$ and let $\Psi : K_{1} \to K$ be the mapping defined by $\Psi (z)=(z,\omega (z))$. We claim that $f\circ\Psi\in A_{D}(K_1)$. First, we note that $f\circ\Psi\in C(K_1)$. Let $\Phi : D\to K_1$ be a holomorphic and injective mapping on some disc $D\subset\mathbb{C}$. We shall show that $(f\circ\Psi)\circ\Phi$ is holomorphic on $D$. To see this, we notice that $\Psi\circ\Phi: D\to K$ is injective and holomorphic on $D$, since $\Psi\circ\Phi (z)=(\Phi (z), \omega (\Phi(z)))$, the mapping $\Phi$ is holomorphic and injective on $D$ and $\omega\in A_{D}(K_1)$. Thus, $(f\circ\Psi)\circ\Phi=f\circ (\Psi\circ\Phi)$ is holomorphic on $D$, because $f\in A_{D}(K)$. We conclude that $f\circ\Psi\in A_{D}(K_1)$. From the assumption we have $A_{D}(K_{1})=\overline{\mathcal O}(K_1)$. Thus, for any given $\varepsilon >0$, there is a function $q$ which is holomorphic on an open neighbourhood of $K_1$ such that
	\begin{equation*}
		\sup _{z\in K_{1}}|f\circ\Psi (z)-q(z)|<\varepsilon .
	\end{equation*}
	We consider the function $Q$, given by $Q(z,w)=q(z)$, which is holomorphic on a neighbourhood of $K$. Thus
	\begin{equation*}
		\sup _{(z,w)\in K} |f(z,w)-Q(z,w)|=\sup _{z\in K_{1}}|f\circ\Psi (z)-q(z)|<\varepsilon ,
	\end{equation*}
	and so $f\in \overline{\mathcal O}(K)$. This shows that $A_{D}(K)\subset\overline{\mathcal O}(K)$, which completes the proof. \end{proof}

	\begin{rem}
		\
		\
				 
		\begin{enumerate}
			\item Note that Example \ref{counterexample1} can be seen as a trivial graph where $K_{1}$ is the closed unit disc and $\omega\equiv 0$.
			\item Suppose that $K_1$ is a compact set in $\mathbb{C}$ with $K_{1}^{\circ}\neq\emptyset$. We note that if in Theorem  \ref{counterexample2} we replace the algebra $A_{D}$ by the algebra $A$ (both in the assumption and the conclusion), then the corresponding result is not valid; that is, $A(K)\neq \overline{\mathcal O}(K)$. To see this, assume $A(K_1)=\overline{\mathcal O}(K_1)$ where $K_{1}$ is any compact subset of $\mathbb C$ with non-empty interior and let $K=\{(z,w)\in \mathbb{C}^{2}: z\in K_{1}, w=\omega (z) \}$, for some function $w$ in $A(K_1)$. Consider the function $f(z,w)=|z|$. Then $f\in C(K)=A(K)$, because $K$ has empty interior in $\mathbb{C}^2$. Then, as in Example \ref{counterexample1} we can derive that $f$ is not in $\overline{\mathcal O}(K)$.

			\item Using  arguments similar to those in Theorem \ref{counterexample2} we can obtain the corresponding results if we replace (both in the assumption and the conclusion) the class $\overline{\mathcal O}$ with the classes $R$ or $P$.
			\item We note that if $K\subset\mathbb{C}^{d}$ is a closed polydisc or a Euclidean ball, then $A(K)=A_{D}(K)=\overline{\mathcal O}(K)=R(K)$ and the approximation can be realized by polynomials \cite{2013arXiv1304.5511M}.
			      			      
		\end{enumerate}
	\end{rem}

	\section{Applications}
	\label{Applications}
		In this section we provide certain approximation results for Cartesian products of an arbitrary (possibly infinite) indexed family of planar compact sets. For this purpose, in view of Proposition \ref{M}, we extend the definition of the algebra $A_{D}$ to arbitrary products.
		
	\begin{defn}
		\label{infiniteprod}
		Let $(K_i)_{i\in I}$ be an arbitrary family of compact subsets of $\mathbb C$ and $K=\prod_{i\in I} K_{i}$. A function $f:K\to \mathbb C$ is said to belong to the class $A_{D}(K)$ if the following conditions hold:
		\begin{itemize}
			\item[(a)] $f$ is continuous on $K$ endowed with the Cartesian topology,
			\item[(b)] For every $i_0\in I$, if we fix $w_i\in K_i$, $i\in I\setminus\{i_0\}$ then the function $K_{i_{0}}\ni z\to f\big((z_i(z)_{i\in I}\big)\in \mathbb C$ belongs to $A(K_{i_{0}})$, where $z_{i_0}(z)=z$ and $z_{i}(z)=w_i$ for all $i\in I\setminus\{i_0\}$.
		\end{itemize}
	\end{defn}
		
	\begin{thm}
		\label{thrm:mainA(Omega)}
		Let $\{K_i\}_{i\in I}$ be a family of planar compact sets satisfying $\overline{\mathcal O} (K_i)=A_D(K_i)$. Assume that for each $i\in I$, $L_i:=\{\alpha_{i,j},j\in J_i\}$ is a is a subset of $\mathbb C\cup\{\infty\}$ such that each connected component of $(\mathbb C\cup\{\infty\})\setminus K_i$ contains one of such elements. Then, if $K=\prod_{i\in I} K_i$, finite sums of finite products of rational functions of one variable $z_i$ with  poles in the set $L_i$ are uniformly dense in $A_{D}(K)$.
	\end{thm}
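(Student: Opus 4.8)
The plan is to bootstrap the infinite-product statement from the finite-product case established in Corollary \ref{answerquestion}, using the standard fact that a continuous function on a product of compacta depends, up to $\varepsilon$, on only finitely many coordinates --- and, crucially, that this reduction stays inside the class $A_D$. Fix $f\in A_D(K)$ and $\varepsilon>0$. First I would produce a finite set $F\subset I$ such that $|f(x)-f(y)|<\varepsilon$ whenever $x,y\in K$ agree on all coordinates in $F$: each point of $K$ has a basic open neighbourhood (restricting only finitely many coordinates) on which $f$ oscillates by less than $\varepsilon$; by compactness extract a finite subcover and let $F$ be the union of the finitely many coordinate sets involved. Choosing any base point $(a_i)_{i\in I\setminus F}$, set $g\big((z_i)_{i\in F}\big)=f\big((z_i)_{i\in F},(a_i)_{i\in I\setminus F}\big)$ on $\prod_{i\in F}K_i$; then $\sup_K|f-g\circ\pi_F|\le\varepsilon$, where $\pi_F\colon K\to\prod_{i\in F}K_i$ is the coordinate projection.

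The key step is that $g\in A_D\big(\prod_{i\in F}K_i\big)$. Continuity is immediate. For the separate analyticity required by Definition \ref{infiniteprod}(b) --- which, for a finite index set, coincides with the condition in Proposition \ref{M}(b), hence with membership in $A_D$ in the sense of Definition \ref{A new} --- fix $i_0\in F$ and points $w_i\in K_i$ for $i\in F\setminus\{i_0\}$; the one-variable function $K_{i_0}\ni z\mapsto g(\cdots)$ is precisely the function appearing in Definition \ref{infiniteprod}(b) for $f$ with frozen coordinates $w_i$ ($i\in F\setminus\{i_0\}$) and $a_i$ ($i\in I\setminus F$), so it lies in $A(K_{i_0})$. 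It is essential here that $A_D$ for products is defined allowing the frozen coordinates to lie anywhere in $K_i$, including the boundary $\partial K_i$; this is exactly the feature whose absence makes the analogous statement fail for the algebra $A$ (cf.\ Example \ref{counterexample1}).

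Now relabel $F=\{1,\dots,d\}$. Since $\overline{\mathcal O}(K_i)=A_D(K_i)$ for each $i\in F$ and each $L_i$ meets every connected component of $(\mathbb C\cup\{\infty\})\setminus K_i$, Corollary \ref{answerquestion} gives $A_D\big(\prod_{i\in F}K_i\big)=R_L\big(\prod_{i\in F}K_i\big)$, so there is a finite sum $h$ of finite products of one-variable rational functions in the variables $z_i$ ($i\in F$) with poles in $L_i$ satisfying $\sup_{\prod_{i\in F}K_i}|g-h|<\varepsilon$. The function $h\circ\pi_F$ on $K$ is again a finite sum of finite products of one-variable rational functions in the variables $z_i$ with poles in $L_i$ (it simply does not involve the coordinates outside $F$), and $\sup_K|f-h\circ\pi_F|\le\sup_K|f-g\circ\pi_F|+\sup_{\prod_{i\in F}K_i}|g-h|<2\varepsilon$. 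Since $\varepsilon$ was arbitrary, finite sums of finite products of one-variable rational functions $z_i$ with poles in $L_i$ are dense in $A_D(K)$.

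I expect the only genuine obstacle to be the second paragraph --- verifying that passing to finitely many coordinates preserves membership in $A_D$ --- together with the bookkeeping needed to make the ``finitely many coordinates'' approximation rigorous from compactness of $K$ in the product topology. Everything after that is a direct appeal to the finite-dimensional results, and no new several-variables input is required.
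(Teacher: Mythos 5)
Your proposal is correct and follows essentially the same route as the paper: reduce to a function depending on finitely many coordinates, check that the reduced function lies in $A_D$ of the finite product (this is where the boundary-freezing feature of Definition \ref{infiniteprod} is used), and then invoke Corollary \ref{answerquestion}. The only difference is that you prove the finite-coordinate reduction directly from compactness in the product topology, where the paper cites \cite[Lemma 4.3]{FalcoNestoridis2017}; your verification that the reduced function stays in $A_D$ is a detail the paper leaves implicit, and your argument for it is sound.
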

		
	\begin{proof}
		Let $f\in A_D(K)$ and $\varepsilon>0$. Then according to \cite[Lemma 4.3]{FalcoNestoridis2017}, there exists a finite set $F\subset I$ and an element $(\zeta_i)_{i\in I}\in K$ such that the function $g:K\to\mathbb C$ satisfies $\vert f(z)-g(z)\vert<\varepsilon/2$ for all $z=(z_i)_{i\in I}\in K$, where $g(z)=f\big((w_i(z))_{i\in I}\big)$, with $w(z_i)=z_i$ if $i\in F$ and $w(z_i)=\zeta_i$ if $i\in  I\setminus F$. Since $g$ can be seen as an element of $A_D(\prod_{i\in F}K_i)=R_{L}(\prod_{i\in F}K_i)$, according to Corollary \ref{answerquestion} we have that $g$ can be  $\varepsilon/2$ approximated by a function $h$ which is a finite sum of finite products of rational functions of one variable $z_{i}$ with poles in $L_i$. Then, we define $\tilde h(z)=h\big((z_i)_{i\in F}\big)$ for $z\in K$ and we have that $\vert f(z)-\tilde h(z)\vert<\varepsilon$ for all $z\in K$. This concludes the proof. 
	\end{proof}
		
	Given a product  $K=\prod_{i\in I}K_{i}$ of planar compact sets, we consider that a polynomial on $K$ is a finite sum of finite products of monomials depending only on one variable. As before we denote by $P(K)$ the set of uniform limits of polynomials on $K$. Moreover, a rational function on $K$ is a quotient $P/Q$ of two polynomials on $K$ with $Q\not\equiv 0$.
		
	\begin{rem}
		If in the previous theorem we assume that $\mathbb C\setminus K_i$ is connected for all $i\in I$, then the approximation can be realized by polynomials depending on a varying finite set of variables.
	\end{rem}
		
	\begin{defn}
		\label{infiniteR}
		Let $\{K_i\}_{i\in I}$ be a family of planar compact sets and $K$ a compact subset of $\prod_{i\in I}K_{i}$. A function $f:K\to \mathbb C$ is said to belong to the class $R(K)$ if it is a uniform limit on $K$ of rational functions depending on a finite number of variables with no singularities on $K$. Let $L_{i}$ be a subset of $(\mathbb C\cup\{\infty\})\setminus K_{i}$  for each $i\in I$. Then $R_{L}(K)$ denotes the set of uniform limits on $K$ of finite sums of finite products of rational functions depending on one variable $z_{i}$ with poles in $L_{i}$.
	\end{defn}

	\begin{cor}
		\label{cor:equivalence}
		Let $\{K_i\}_{i\in I}$ be a family of planar compact sets and  $K=\prod_{i\in I} K_i$. Let $L_{i}$, $i\in I$ be a subset of $(\mathbb C\cup\{\infty\})\setminus K_{i}$ containing at least one element from each component of $(\mathbb C\cup\{\infty\})\setminus K_{i}$. Then, the following assertions are equivalent:
		\begin{itemize}
			\item[(a)] $R(K)=A_D(K)$;
			\item[(b)] $R_{L}(K)=A_{D}(K)$;
			\item[(c)] $R(K_i)=A_{D}(K_i)$, for each $i\in I$.
		\end{itemize}
	\end{cor}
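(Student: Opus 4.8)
The plan is to establish the chain of implications $(c)\Rightarrow(b)\Rightarrow(a)\Rightarrow(c)$. The implication $(b)\Rightarrow(a)$ is immediate, since every finite sum of finite products of rational functions of one variable $z_i$ with poles in $L_i$ is in particular a rational function depending on finitely many variables with no singularities on $K$; hence $R_L(K)\subset R(K)\subset A_D(K)$, and if the outer terms agree so do all three.

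For $(c)\Rightarrow(b)$, the idea is to reduce to the finite case and then invoke Corollary \ref{answerquestion}. Assume $R(K_i)=A_D(K_i)$ for every $i\in I$; by Theorem \ref{algebrasequivalences} (applied to the single planar compact set $K_i$, where all the algebras $R_L(K_i), R(K_i), \overline{\mathcal O}(K_i)$ coincide) this gives $\overline{\mathcal O}(K_i)=A_D(K_i)$ for each $i$. Now let $f\in A_D(K)$ and $\varepsilon>0$. Exactly as in the proof of Theorem \ref{thrm:mainA(Omega)}, apply \cite[Lemma 4.3]{FalcoNestoridis2017} to find a finite set $F\subset I$ and $(\zeta_i)_{i\in I}\in K$ so that the function $g(z)=f\big((w_i(z))_{i\in I}\big)$, with $w_i(z)=z_i$ for $i\in F$ and $w_i(z)=\zeta_i$ otherwise, satisfies $|f(z)-g(z)|<\varepsilon/2$ on $K$. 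Then $g$ is (the pullback of) an element of $A_D\big(\prod_{i\in F}K_i\big)$, and since $\overline{\mathcal O}(K_i)=A_D(K_i)$ for $i\in F$, Corollary \ref{answerquestion} yields $A_D\big(\prod_{i\in F}K_i\big)=R_L\big(\prod_{i\in F}K_i\big)$, so $g$ can be approximated within $\varepsilon/2$ by a finite sum of finite products of rational functions of one variable $z_i$, $i\in F$, with poles in $L_i$. Pulling this back to $K$ gives the required approximant, so $f\in R_L(K)$ and $(b)$ holds. (In fact this argument is essentially Theorem \ref{thrm:mainA(Omega)} applied to the subfamily indexed by $F$.)

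For $(a)\Rightarrow(c)$, fix $i_0\in I$ and take $f_0\in A_D(K_{i_0})$. The plan is to promote $f_0$ to an element of $A_D(K)$, approximate it using $(a)$, and then restrict back to $K_{i_0}$. Fix once and for all points $\zeta_i\in K_i$ for $i\neq i_0$ and define $f:K\to\mathbb C$ by $f\big((z_i)_{i\in I}\big)=f_0(z_{i_0})$. This $f$ is continuous on $K$ in the product topology (it factors through a continuous coordinate projection), and it clearly satisfies condition $(b)$ of Definition \ref{infiniteprod}: when $i_0$ is the free index it reduces to $f_0\in A(K_{i_0})$ (note $A_D(K_{i_0})\subset A(K_{i_0})$ since $K_{i_0}$ is planar), and when any other index is free $f$ is constant. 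Hence $f\in A_D(K)$. By $(a)$, given $\varepsilon>0$ there is a rational function $\rho$ depending on finitely many variables, with no singularities on $K$, such that $\sup_{z\in K}|f(z)-\rho(z)|<\varepsilon$. Now substitute $z_i=\zeta_i$ for all $i\neq i_0$: the resulting function $z\mapsto\rho\big((z_i(z))_{i\in I}\big)$, where $z_{i_0}(z)=z$ and $z_i(z)=\zeta_i$ otherwise, is a rational function of the single variable $z$ with no poles on $K_{i_0}$ (its denominator does not vanish since $\rho$ had no singularities on $K$ and in particular on the slice), hence an element of $r(K_{i_0})$, and it approximates $f_0$ uniformly on $K_{i_0}$ to within $\varepsilon$. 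Thus $f_0\in R(K_{i_0})$, giving $A_D(K_{i_0})\subset R(K_{i_0})$; the reverse inclusion always holds, so $(c)$ follows.

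\textbf{Main obstacle.} The only delicate point is the slicing step in $(a)\Rightarrow(c)$: one must be sure that restricting a finitely-supported rational function $\rho$ with no singularities on $K$ to the one-dimensional slice produces a genuine element of $r(K_{i_0})$ rather than something with spurious poles — i.e. that the denominator, a polynomial in finitely many variables nonvanishing on $K$, remains nonvanishing after the substitution. This is clear because the slice is a subset of $K$, but it is worth stating explicitly. A secondary subtlety is verifying that the ``promotion'' function $f$ genuinely lies in $A_D(K)$ under Definition \ref{infiniteprod} rather than merely being continuous; this is handled by the case analysis on which index is free. The reduction to finitely many variables via \cite[Lemma 4.3]{FalcoNestoridis2017} in $(c)\Rightarrow(b)$ is routine given Theorem \ref{thrm:mainA(Omega)}.
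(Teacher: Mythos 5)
Your proposal is correct and follows essentially the same route as the paper: $(c)\Rightarrow(b)$ via Theorem \ref{thrm:mainA(Omega)} (after noting $R(K_i)=\overline{\mathcal O}(K_i)$ by Runge), $(b)\Rightarrow(a)$ from the trivial inclusions, and $(a)\Rightarrow(c)$ by promoting a function of one coordinate to $A_D(K)$ and restricting the rational approximant to a one-dimensional slice, which is exactly what the paper's terse phrase ``considering suitable functions in $A_{D}(K)$ depending only on one complex variable'' refers to. Your write-up simply supplies the details the paper leaves implicit, including the correct observation that the denominator of the approximant stays nonvanishing on the slice because the slice lies in $K$.
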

		
	\begin{proof}
		The implication $(c)\Rightarrow (b)$ is given by Theorem \ref{thrm:mainA(Omega)}. The implication $(b) \Rightarrow (a)$ is obvious since $R_L(K)\subset R(K) \subset A_{D}(K)$. Finally, the implication $(a)  \Rightarrow (c)$ is obtained by considering suitable functions in $A_{D}(K)$ depending only on one complex variable.
	\end{proof}

	\begin{rem}
		Under the above notation, the following assertions are equivalent:
		\begin{itemize}
			\item[(a)] $P(K)=A_D(K)$;
			\item[(b)] $\mathbb C\setminus K_i$ is connected for each $i\in I$.
		\end{itemize}
	\end{rem}

	\begin{cor}
		\label{cor:union}
		Let $\{K^{i}\}_{i\in I} $ be a family of planar compact sets with $K^{i}=\cup_{k\in I_{i}}K_{k,i}$,  where $\{K_{k,i}\}_{k\in I_{i}}$ is a family of pairwise disjoint planar sets and  $I_{i}$ is some index set. Denote by  $K=\prod_{i\in I} K^i$ and by $\{K_{j}\}_{j\in J}$ the family of all Cartesian products $\{\prod_{i\in I} K_{k,i}:k\in I_{i}\}$. Thus, we have that $\cup_{j\in J} K_j = K =\prod_{i\in I} K^i$. Assume that for each $i\in I$, $L_i$ is a set of points in $\mathbb C\cup\{\infty\}$ such that each connected component of $\mathbb C\setminus K^i$ contains one element of $L_{i}$. If $R(K)=A_{D}(K)$, then for every subset $H\subset J$ with $\cup_{j\in H} K_j$ compact and $\cup_{j\notin H} K_j$ compact, we have
		\begin{itemize}
			\item[(a)] $R(\cup_{j\in H}K_{j})=A_D(\cup_{j\in H}K_{j})$,
			\item[(b)]  $R_{L}(\cup_{j\in H}K_{j})=A_{D}(\cup_{j\in H}K_{j})$.
		\end{itemize}
	\end{cor}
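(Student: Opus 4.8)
The plan is to derive both $(a)$ and $(b)$ from the single equality $A_D(K)=R_L(K)$ on the full product $K=\prod_{i\in I}K^i$, by means of a clopen‑extension trick. Set $X=\bigcup_{j\in H}K_j$ and $Y=\bigcup_{j\notin H}K_j$. Since the $K_j$ are pairwise disjoint we have $X\cap Y=\emptyset$, while $X\cup Y=K$; by the two compactness hypotheses, $X$ and $Y$ are complementary closed subsets of $K$, hence both clopen in $K$ (in the product topology when $I$ is infinite). As $R_L(X)\subset R(X)\subset A_D(X)$ holds in general, it suffices to prove $A_D(X)\subset R_L(X)$, which at once yields $A_D(X)=R_L(X)$ and $A_D(X)=R(X)$.

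The key step is that the restriction map $A_D(K)\to A_D(X)$ is surjective. Given $f\in A_D(X)$, define $F\colon K\to\mathbb C$ by $F=f$ on $X$ and $F\equiv 0$ on $Y$; I claim $F\in A_D(K)$. Continuity of $F$ is immediate from the clopen splitting. For the holomorphy requirement in the definition of $A_D$, one uses that a connected subset of $K$ cannot meet both $X$ and $Y$: in the description of Definition \ref{A new}, the image $\phi(D)$ of a disc under an injective holomorphic $\phi\colon D\to K$ is connected, so it lies entirely in $X$ or entirely in $Y$, and accordingly $F\circ\phi$ equals $f\circ\phi$ (holomorphic since $f\in A_D(X)$) or $0$; in the description of Definition \ref{infiniteprod}, after fixing $w_i\in K^i$ for $i\neq i_0$ the slice map $p\colon K^{i_0}\to K$, $p(z)=(w_i;z)$, is continuous, so $p^{-1}(X)$ and $p^{-1}(Y)$ are disjoint clopen subsets of $K^{i_0}$, and the slice of $F$ vanishes on the second and coincides with the corresponding slice of $f$ on the first, whence it belongs to $A(K^{i_0})$. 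In either case $F\in A_D(K)$.

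Finally I would bring in the hypothesis. Since $K$ is a product of planar compacta and each $L_i$ contains a point from every connected component of $(\mathbb C\cup\{\infty\})\setminus K^i$, Corollary \ref{cor:equivalence} upgrades $R(K)=A_D(K)$ to $A_D(K)=R_L(K)$. Hence $F$ is a uniform limit on $K$ of functions $h_n$, each a finite sum of finite products of rational functions of one variable $z_i$ with poles in $L_i$. Restricting to $X$ gives $h_n|_X\to f$ uniformly on $X$, and each $h_n$ is free of singularities on $X$, because its poles lie in $L_i\subset(\mathbb C\cup\{\infty\})\setminus K^i$ whereas every point of $X\subset\prod_i K^i$ has $i$‑th coordinate in $K^i$. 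Therefore $f\in R_L(X)$, which establishes $A_D(X)=R_L(X)=R(X)$. I expect the main obstacle to be precisely the surjectivity of restriction in the second paragraph: this is the point at which the two compactness assumptions on $\bigcup_{j\in H}K_j$ and $\bigcup_{j\notin H}K_j$ are essential, since they are exactly what make $K=X\sqcup Y$ a clopen partition and thereby prevent the piecewise‑defined $F$ from breaking holomorphy along a disc or a slice that straddles $X$ and $Y$.
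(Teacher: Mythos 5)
Your proposal is correct and follows essentially the same route as the paper: extend $f\in A_D(\cup_{j\in H}K_j)$ by zero on the complementary compact $\cup_{j\notin H}K_j$ (legitimate because the two pieces form a clopen partition of $K$, so no disc or slice straddles them), then invoke Corollary \ref{cor:equivalence} to replace $R(K)=A_D(K)$ by $R_L(K)=A_D(K)$ and restrict the approximants. The paper states this in one line; you have merely supplied the verification that the zero-extension lies in $A_D(K)$ and that the restricted approximants have no singularities on $\cup_{j\in H}K_j$, which is exactly the intended argument.
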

	\begin{proof}
		Note that if $j_1\not=j_2$ then    $K_{j_1}\cap K_{j_2}= \emptyset$, since  the sets $\{K_{k,i}\}_{k\in I_{i}}$ are pairwise disjoint. For fixed $H\subset J$, we have that any function in $A_D(\cup_{j\in H}K_{j})$ can be extended to a function in $A_{D}(K)$ by using the fact that the sets $\{K_{j}\}_{j\in J}$ are pairwise disjoint and  $\cup_{j\in H} K_j$  and $\cup_{j\notin H} K_j$  are compact. For instance the extension can be defined as the zero function on the set $ \cup_{j\notin H}K_{j}$. Then the previous corollary gives us the desired result.
	\end{proof}
		
	Before we continue, let us show the following lemma which will be necessary for the proof of Theorem \ref{the:manyequivalences}.
		
	\begin{lem}
		\label{prodcomplexnumbers}
		Given  $\tilde\varepsilon>0$, if $z_{1},\ldots,z_{d}\in \mathbb C$ are such that $\vert z_{i}-1\vert\leq \tilde \varepsilon$ for $i=1,\ldots, d$, then $$\vert 1- \prod_{i=1}^{d}z_{i}\vert\leq (1+\tilde\varepsilon)^{d}-1.$$
	\end{lem}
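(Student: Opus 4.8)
The plan is to reduce the statement to a one-line telescoping estimate. First I would record the elementary observation that $|z_i - 1| \le \tilde\varepsilon$ forces $|z_i| \le 1 + \tilde\varepsilon$ for each $i$, hence $\bigl|\prod_{i=1}^{k} z_i\bigr| \le (1+\tilde\varepsilon)^{k}$ for every $k = 0, 1, \ldots, d$ (the empty product being $1$). This is the only input about the moduli of the $z_i$ that will be needed.

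Next I would write the difference $\prod_{i=1}^{d} z_i - 1$ as a telescoping sum,
\[
	\prod_{i=1}^{d} z_i - 1 = \sum_{k=1}^{d}\Bigl(\prod_{i=1}^{k} z_i - \prod_{i=1}^{k-1} z_i\Bigr) = \sum_{k=1}^{d}\Bigl(\prod_{i=1}^{k-1} z_i\Bigr)(z_k - 1),
\]
and then apply the triangle inequality together with the bounds above:
\[
	\Bigl|1 - \prod_{i=1}^{d} z_i\Bigr| \le \sum_{k=1}^{d}(1+\tilde\varepsilon)^{k-1}\,|z_k - 1| \le \tilde\varepsilon\sum_{k=1}^{d}(1+\tilde\varepsilon)^{k-1}.
\]
Summing the finite geometric series gives $\tilde\varepsilon \cdot \dfrac{(1+\tilde\varepsilon)^{d}-1}{(1+\tilde\varepsilon)-1} = (1+\tilde\varepsilon)^{d} - 1$, which is exactly the claimed bound.

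Alternatively, and perhaps more cleanly for the write-up, I would argue by induction on $d$: the case $d = 1$ is immediate, and for the inductive step one writes $\bigl|1 - z_d\prod_{i=1}^{d-1}z_i\bigr| \le |1 - z_d| + |z_d|\,\bigl|1 - \prod_{i=1}^{d-1}z_i\bigr| \le \tilde\varepsilon + (1+\tilde\varepsilon)\bigl((1+\tilde\varepsilon)^{d-1}-1\bigr) = (1+\tilde\varepsilon)^{d}-1$. There is no real obstacle here; the only thing to be careful about is bookkeeping with the empty product and making sure the geometric-series identity (or the algebra in the inductive step) is written out correctly, so I would simply present the induction and keep the computation explicit.
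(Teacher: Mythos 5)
Your proposal is correct, and the inductive version you sketch is exactly the argument in the paper: split off $z_d$ via the triangle inequality, bound $|z_d|$ by $1+\tilde\varepsilon$, and apply the inductive hypothesis. The telescoping-sum variant is just the same induction unrolled (with the geometric series replacing the recursive algebra), so there is nothing materially different to compare.
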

		
	\begin{proof}
		We prove the result by induction on $d$. For $d=1$ the result follows immediately. Assume that the result holds for any set of $d-1$ complex numbers. Then, 
		\begin{align*}
			\vert 1- \prod_{i=1}^{d}z_{i}\vert & \leq \vert 1- z_{d}\vert +\vert z_{d}-\prod_{i=1}^{d}z_{j}\vert                      \\
			                                   & \leq \tilde\varepsilon +\vert z_{d}\vert\vert 1 - \prod_{i=1}^{d-1}z_{i}\vert        \\
			                                   & \leq \tilde\varepsilon +(1+\tilde\varepsilon)\Big((1+\tilde\varepsilon)^{d-1}-1\Big) \\
			                                   & =(1+\tilde\varepsilon)^{d}-1.                                                        
		\end{align*}
	\end{proof}

	For the purposes of our next result we need to extend Definition  \ref{infiniteprod} to more general subsets in $\mathbb{C}^I$, which are not necessarily products. If $K^j=\prod_{i\in I}K_i\subset \mathbb{C}^I$ is a product of planar compact sets $K_i$ and $K$ is a finite disjoint union of such products $K^j$, then for a function $f: K\rightarrow \mathbb{C}$ we say that $f\in A_{D}(K)$ if and only if $f|_{K^j}\in A_{D}(K^j)$ for all $j$. More generally, if $\mathbb C^{I}$ is endowed with the product topology and $K\subset \mathbb C^{I}$ is compact, we say that a function $f:K\to \mathbb C$ belongs to $A_{D}(K)$ if it is continuous on $K$, and for every $\Phi:D\to K\subset \mathbb C^{I}$ holomorphic on an open disc $D\subset \mathbb C$, the composition $f\circ \Phi: D\to \mathbb C$ is holomorphic on $D$. This definition in the case of products of planar compact sets implies Definition \ref{infiniteprod}. The converse is also true. To see this consider a function $f  : K\rightarrow \mathbb{C}$ satisfying the requirements of Definition \ref{infiniteprod}. Then, since $f$ is continuous on $K$ with respect to the product topology, it is the uniform limit on $K$ of a sequence of functions $f_{n}\in A_{D}(K)$ depending on a varying finite set of coordinates. Using Proposition \ref{M} we conclude that $f_{n}\circ \Phi$ is holomorphic on the disc $D$. Since $f_{n}\circ \Phi\to f\circ \Phi$ uniformly on $D$, as $n\to\infty$, we conclude that $f\circ \Phi$ is also holomorphic on $D$. Therefore, we see that in the case of products of planar compact sets the two definitions coincide, even in the infinite dimensional case.
		
	The next result is a generalization of Corollary \ref{cor:equivalence}. In this case we consider rational approximation on pairwise disjoint unions of products of planar sets  under some restrictions. Given a set $K$ and a subset $E$ of $K$, we denote by $\chi_{E}$ the function that is equal to one on $E$ and zero on $K\setminus E$.
	
	To help the reader comprehend the statement of Theorem \ref{the:manyequivalences} we first present a specific example which satisfies its geometrical assumptions.
	
	\begin{ex}
		Denote by $P\big((a_{1},a_{2}),(\rho_{1},\rho_{2})\big)$ the closed polydisc in $\mathbb C^{2}$ of center $(a_{1},a_{2})$ and polyradius $(\rho_{1}, \rho_{2})$, that is $\{(z_{1},z_{2})\in\mathbb C^{2}:\vert z_{1}-a_{1}\vert\leq \rho_{1}\text{ and }\vert z_{2}-a_{2}\vert\leq \rho_{2}\}$. Set 
		\begin{itemize}
		\item $K_{1}=P\big((35,55),(25,5)\big)$,
		\item $K_{2}=P\big((17,35),(7,5)\big)$,
		\item $K_{3}=P\big((35,15),(15,5)\big)$,
		\item $K_{4}=P\big((80,55),(10, 12)\big)$,
		\end{itemize}
		and 
		\[
			K=K_{1}\cup K_{2}\cup K_{3} \cup K_{4}.
		\]
		
		Then 
		\[
			K^{1}=\overline D(35,25)\cup \overline D(17,7)\cup \overline D(35,15)\cup \overline D(80,10)=\overline D(35,25)\cup \overline D(80,10),
		\] 
		and 
		\[
			K^{2}=\overline D(55,5)\cup \overline D(35,5)\cup \overline D(15,5)\cup \overline D(55,12)=\overline D(15,5)\cup \overline D(35,15)\cup \overline D(55,12).
		\]
		
		\begin{center}
			\begin{tikzpicture}[
					scale=5,
					axis/.style={very thick, ->, >=stealth'},
					important line/.style={thick},
					dashed line/.style={dashed, thin},
					pile/.style={thick, ->, >=stealth', shorten <=2pt, shorten
						>=2pt},
					every node/.style={color=black}
				]
				\draw[axis] (-0.1,0)  -- (1.1,0) node(xline)[right]
				{$\vert z_{1}\vert$};
				\draw[axis] (0,-0.1) -- (0,0.9) node(yline)[above] {$\vert z_{2}\vert$};
				\draw[draw=black, fill=gray!50] (0.1,0.5) rectangle (0.6,0.6);
				\node at (0.3,0.55) {$K_{1}$};
				\draw[dashed] (0.1,0) -- (0.1,0.5);
				\draw[dashed] (0.6,0) -- (0.6,0.5);
				
				\draw[dashed] (0,0.5) -- (0.1,0.5);
				\draw[dashed] (0,0.6) -- (0.6,0.6);
						
				\draw[draw=black, fill=gray!50] (0.1,0.3) rectangle (0.3,0.4);
				\node at (0.2,0.35) {$K_{2}$};
				\draw[dashed] (0.1,0) -- (0.1,0.3);
				\draw[dashed] (0.3,0) -- (0.3,0.3);
				
				\draw[dashed] (0,0.3) -- (0.1,0.3);
				\draw[dashed] (0,0.4) -- (0.1,0.4);
						
				\draw[draw=black, fill=gray!50] (0.2,0.1) rectangle (0.5,0.2);
				\node at (0.35,0.15) {$K_{3}$};
				\draw[dashed] (0.2,0) -- (0.2,0.1);
				\draw[dashed] (0.5,0) -- (0.5,0.1);
						
				\draw[dashed] (0,0.1) -- (0.2,0.1);
				\draw[dashed] (0,0.2) -- (0.2,0.2);

				\draw[draw=black, fill=gray!50] (0.7,0.43) rectangle (0.9,0.67);
				\node at (0.8,0.55) {$K_{4}$};
				\draw[dashed] (0.7,0) -- (0.7,0.43);
				\draw[dashed] (0.9,0) -- (0.9,0.43);
				
				\draw[dashed] (0,0.43) -- (0.7,0.43);
				\draw[dashed] (0,0.67) -- (0.9,0.67);
						
				\draw[line width=3pt] (0.1,0) -- (0.6,0);
				\draw[line width=3pt] (0.7,0) -- (0.9,0);
				\fill[fill=red!50] (0,0.05)  circle[radius=.4pt];
				\node at (0.5,-.05) {$K^{1}$};
				\node at (1,-.07) {$L_{1}$};

				\draw[line width=3pt] (0,0.1) -- (0,0.2);
				\draw[line width=3pt] (0,0.3) -- (0,0.4);
				\draw[line width=3pt] (0,0.43) -- (0,0.67);
				\fill[fill=red!50] (1,0)  circle[radius=.4pt];
				\node at (-0.07,0.55) {$K^{2}$};
				\node at (-.08,0.05) {$L_{2}$};

			\end{tikzpicture}
		\end{center}
		and we also consider $L_{1}=\{100\}$ and $L_{2}=\{5i\}$.
	\end{ex}
	
	\begin{thm}
		\label{the:manyequivalences}
		Let $\{K_{j}\}_{j=1}^{m}$ be a finite pairwise disjoint family of products of planar compacta, $K_{j}=\prod_{i\in I} K_{j,i}$, for a fixed set $I$. Let $K=\cup_{j=1}^{m}K_{j}$ and $K^{i}=\cup_{j=1}^{m}K_{j,i}$.
		Assume that, for all $i\in I $ and all $1\leq j_{1},j_{2},j_{3}\leq m$, if $K_{j_{1},i}\cap K_{j_{2},i}\ne \emptyset$ and $K_{j_{2},i}\cap K_{j_{3},i}\ne \emptyset$ then $K_{j_{1},i}\cap K_{j_{3},i}\ne \emptyset$. Assume also that, for each $i\in I$, there exists a set of points in $\mathbb C\cup\{\infty\}$
		, $L_{i}$, with $K^{i}\cap L_{i}=\emptyset$ for $i\in I$ and such that  each connected component of $\mathbb C\setminus K_{j,i}$ and each connected component of $\mathbb C\setminus K^{i}$ contains at least one such number.
					
		Then the following assertions are equivalent.
		\begin{itemize}
			\item[(a)] $R(K)=A_D(K)$;
			\item[(b)] $R_{L}(K)=A_D(K)$;
			\item[(c)] For each $j=1,\ldots,m$, $R(K_{j})=A_D(K_{j})$ and $\chi_{K_{j}}\in R(K)$;
			\item[(d)] For each $j=1,\ldots,m$, $R_{L}(K_{j})=A_D(K_{j})$ and $\chi_{K_{j}}\in R(K)$;
			\item[(e)] For each $j=1,\ldots,m$ and each $i\in I$, $R(K_{j,i})=A_{D}(K_{j,i})$ and $\chi_{K_{j}}\in R(K)$.
		\end{itemize}
	\end{thm}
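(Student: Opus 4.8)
The plan is to split the five conditions into an ``algebraic'' block $(c)\Leftrightarrow(d)\Leftrightarrow(e)$, handled coordinatewise by Corollary~\ref{cor:equivalence} applied to the individual products $K_j=\prod_{i\in I}K_{j,i}$, and a short loop $(a)\Rightarrow(c)\Rightarrow(b)\Rightarrow(a)$ carrying the global information; but first I would isolate the key preliminary fact that, under the transitivity hypothesis, $\chi_{K_j}\in R_L(K)$ for every $j$. This upgrades the side-condition ``$\chi_{K_j}\in R(K)$'' (which is then automatic) to membership in $R_L(K)$, and it is exactly this upgrade that makes the equivalences go through.

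For the preliminary fact, fix $i\in I$. The transitivity assumption makes the relation ``$K_{j,i}\cap K_{j',i}\neq\emptyset$'' an equivalence relation on the finite family $\{K_{1,i},\dots,K_{m,i}\}$; grouping these factors by class decomposes $K^i$ into a disjoint union of finitely many compacta, each being a union of some of the $K_{j,i}$. Writing $E_i^{\,j}$ for the member of this decomposition that contains $K_{j,i}$, each $\chi_{E_i^{\,j}}$ is locally constant near $K^i$, hence lies in $\overline{\mathcal O}(K^i)$; since $L_i$ meets every component of $\mathbb C\setminus K^i$, Runge's theorem gives $\chi_{E_i^{\,j}}\in R_L(K^i)$, so $z\mapsto\chi_{E_i^{\,j}}(z_i)$ is a uniform limit on $K$ of one-variable rational functions of $z_i$ with poles in $L_i$. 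Next, since the products $K_j$ are pairwise disjoint, I would choose a finite $F\subset I$ with $\pi_F(K_1),\dots,\pi_F(K_m)$ pairwise disjoint in $\mathbb C^F$ (separate each pair over finitely many coordinates, then take the union of these finite sets). Using disjointness together with transitivity one then checks that, for $z\in K$, one has $z\in K_j$ if and only if $z_i\in E_i^{\,j}$ for every $i\in F$: the forward direction is trivial, and conversely if $z\in K_{j'}$ and $z_i\in E_i^{\,j}$ for all $i\in F$, then at each $i\in F$ the point $z_i$ lies in some $K_{j'',i}$ in the class of $K_{j,i}$, so $K_{j',i}\cap K_{j,i}\neq\emptyset$, whence $\pi_F(K_{j'})\cap\pi_F(K_j)\neq\emptyset$ and $j'=j$. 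Therefore $\chi_{K_j}(z)=\prod_{i\in F}\chi_{E_i^{\,j}}(z_i)$ on $K$, a finite product of bounded elements of $R_L(K)$; approximating each factor and controlling the product error, which is precisely the content of Lemma~\ref{prodcomplexnumbers}, gives $\chi_{K_j}\in R_L(K)$.

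With this in hand, $(c)\Leftrightarrow(d)\Leftrightarrow(e)$ is immediate from Corollary~\ref{cor:equivalence} applied to each $K_j$ (the hypotheses supply $L_i\cap K_{j,i}=\emptyset$ and that $L_i$ meets every component of $\mathbb C\setminus K_{j,i}$), the $\chi$-condition being identical in all three. The implication $(b)\Rightarrow(a)$ is trivial from $R_L(K)\subset R(K)\subset A_D(K)$. For $(a)\Rightarrow(c)$: first, $\chi_{K_j}$ is continuous and locally constant on $K$, hence lies in $A_D(K)=R(K)$; second, given $g\in A_D(K_j)$, extend it by $0$ on the remaining $K_{j'}$ to get $\hat g\in A_D(K)=R(K)$ (legitimate since each $K_{j'}$ is clopen in $K$ and $0\in A_D(K_{j'})$), approximate $\hat g$ uniformly on $K$ by rational functions $r_n$ with no singularities on $K$, and restrict: $r_n|_{K_j}\to g$ uniformly and $r_n$ has no singularities on $K_j$, so $g\in R(K_j)$; thus $R(K_j)=A_D(K_j)$. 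The substantive implication is $(c)\Rightarrow(b)$: given $f\in A_D(K)$ and $\varepsilon>0$, write $f=\sum_{j=1}^m f\chi_{K_j}$ on $K$; since $f|_{K_j}\in A_D(K_j)=R_L(K_j)$, pick $h_j$ (a finite sum of finite products of one-variable rationals in $z_i$ with poles in $L_i$) with $\|h_j-f\|_{K_j}$ small, and note that because $L_i\cap K^i=\emptyset$ the same expression $h_j$ is automatically defined, continuous and uniformly bounded, say by $M$, on all of $K$, i.e.\ $h_j\in r_L(K)$. Using $\chi_{K_j}\in R_L(K)$ choose $\psi_j\in r_L(K)$ with $\|\psi_j-\chi_{K_j}\|_K$ small; then $g:=\sum_{j=1}^m h_j\psi_j\in r_L(K)$, and on each $K_{j_0}$ the estimate $\|g-f\|_{K_{j_0}}\le\|h_{j_0}-f\|_{K_{j_0}}+M\|\psi_{j_0}-1\|_{K_{j_0}}+\sum_{j\neq j_0}M\|\psi_j\|_{K_{j_0}}$ (using $\chi_{K_j}\equiv0$ on $K_{j_0}$) makes $\|g-f\|_K<\varepsilon$ once the approximations are fine enough.

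The hard part is the preliminary fact: correctly extracting the equivalence-class decomposition of each $K^i$ from transitivity, carrying out the finite-dimensional separation of the pairwise disjoint products $K_j$, and verifying the combinatorial characterization of membership in $K_j$ in terms of the coordinate classes. Once that identity is established the passage to $R_L(K)$ is routine (Lemma~\ref{prodcomplexnumbers} supplying the product estimate), and the rest of the argument reduces to elementary gluing and restriction, together with Corollary~\ref{cor:equivalence} at the level of the individual factor products.
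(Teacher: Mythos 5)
Your proposal is correct and rests on the same technical core as the paper's proof: your sets $E_i^{\,j}$ are exactly the paper's $D_{j,i}=\bigcup_{K_{j,i}\cap K_{k,i}\neq\emptyset}K_{k,i}$, approximated via Runge's theorem on $K^i$ and multiplied together with the estimate of Lemma \ref{prodcomplexnumbers}, and the final gluing estimate coincides with the paper's inequality \eqref{inequalities}; the only differences are organizational (you run the loop $(a)\Rightarrow(c)\Rightarrow(b)\Rightarrow(a)$, whereas the paper proves $(a)\Leftrightarrow(b)$ directly and then $(e)\Rightarrow(a)$). Your isolation of the fact that $\chi_{K_j}\in R_L(K)$ follows from the geometric hypotheses alone is a worthwhile refinement, since it shows the side-condition $\chi_{K_j}\in R(K)$ in $(c)$, $(d)$, $(e)$ is actually automatic, and it lets the infinite-index case go through without the paper's separate reduction to finitely many variables.
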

		
	\begin{proof}
		We begin by showing that (a) implies (b) and we prove first the case where $I$ is a finite set of $d$ elements. Fix a function $h\in A_D(K)$ and a positive number $\varepsilon$. For each $j=1,\ldots,m$, since $R(K)=A_D(K)$ and the sets $\{K_{j}\}_{j=1}^{m}$ are pairwise disjoint, it is easy to see that $R( K_{j})=A_D(K_{j})$ for $j=1,\ldots,m$. By Corollary \ref{cor:equivalence} if we consider the function $h$ as a function defined on $K_{j}$,  there exists a function $g_j$ with 
		\begin{equation}
			\label{hminusgj}
			\Vert h - g_j\Vert_{K_{j}}\leq \varepsilon/2
		\end{equation} and $g_j$ can be written as a finite sum of finite products of rational functions of one variable $z_i$ with poles in the set $L_i$. Note that in particular the function $g_{j}$ is defined on $K$ since $K^{i}\cap L_{i}=\emptyset$. In fact, $g_{j}$ is bounded on $K$. Without loss of generality we can assume that $\Vert g_j\Vert_{K}\ne 0$.
		For each $i\in I$ and each $j=1,\ldots,m$, define the set 
		$$\displaystyle D_{j,i}=\bigcup_{K_{j,i}\cap K_{k,i}\ne \emptyset}\hspace*{-5mm}K_{k,i}.$$
		Then, $D_{j,i}$ and $K^{i}\setminus D_{j,i}$ are disjoint compact sets. Since $L_{i}$ contains at least one element in each connected component of $\mathbb C \setminus K^{i}$ and the function $\chi_{D_{j,i}}$ can be extended holomorphically on a neighborhood of $K^{i}$, as a consequence of  Runge's Theorem, we can find a rational function $r_{j,i}$ of one complex variable  with prescribed poles in the set $L_i$ such that 
		\begin{equation}
			\label{eq:epsilontilde}
			\Vert \chi_{D_{j,i}} - r_{j,i}\Vert_{ K^{i}}\leq \tilde\varepsilon,
		\end{equation}
		where  $\tilde \varepsilon$ is such that $\tilde \varepsilon(1+\tilde\varepsilon)^{d-1}\leq \dfrac{\varepsilon}{2m\Vert g_j\Vert_{ K}}$ and $(1+\tilde\varepsilon)^{d} -1 \leq \dfrac{\varepsilon}{2m\Vert g_j\Vert_{ K}}$.

		Consider the function $r_j(z_{1},\ldots,z_{d})=\prod_{i=1}^d r_{j,i}(z_{i})$. We have that  $K_{\alpha}\cap K_{j}=\emptyset$ for each  $\alpha\ne j$ because the product sets $K_{1},\ldots,K_{m}$ are pairwise disjoint. In particular, since they are product sets we have that there exists  $i_{0}$ depending on $\alpha$ and $j$ so that $K_{\alpha,i_{0}}\cap K_{j,i_{0}}=\emptyset$. Since for any $1\leq j_{1},j_{2},j_{3}\leq m$ if $K_{j_{1},i}\cap K_{j_{2},i}\ne \emptyset$ and $K_{j_{2},i}\cap K_{j_{3},i}\ne \emptyset$, then, $K_{j_{1},i}\cap K_{j_{3},i}\ne \emptyset$, we have that $K_{\alpha,i_{0}}\cap D_{j,i_{0}}=\emptyset$.
		Thus,  $\chi_{D_{j,i_{0}}}=0$ on the set $K_{\alpha,i_{0}}$ and by \eqref{eq:epsilontilde}   we obtain that $\Vert r_{j, i_{0}}\Vert_{K_{\alpha,i_{0}}}\leq \tilde \varepsilon$. Also, by \eqref{eq:epsilontilde}  again,  for any $i\ne i_{0}$ and for every $\alpha=1,\ldots,m$ we have that $\Vert r_{j,i}\Vert_{K_{\alpha,i}}\leq 1+\tilde\varepsilon$. Hence, for any $\alpha\ne j$, 
		\begin{equation*}
			\Vert \chi_{K_{j}}-r_{j}\Vert_{K_{\alpha}}=\Vert r_{j}\Vert_{K_{\alpha}}=\Vert \prod_{i=1}^{d}r_{j,i}\Vert_{K_{\alpha}}\leq\Vert r_{j,1}\Vert_{K_{\alpha,1}}\cdots \Vert r_{j,d}\Vert_{K_{\alpha,d}}\leq \tilde\varepsilon(1+\tilde\varepsilon)^{d-1}\leq \frac{\varepsilon}{2m\Vert g_j\Vert_{ K}}.
		\end{equation*}
		Also, for $ i=1,\dots , d$ and $z_i\in K_{j,i}\subset D_{j,i}$, using \eqref{eq:epsilontilde} we get
		$$
		\vert 1-r_{j,i}(z_i)\vert=\vert \chi_{D_{j,i}}(z_i)-r_{j,i}(z_i)\vert\leq \Vert \chi_{D_{j,i}}-r_{j,i}\Vert_{K^{i}}\leq\tilde \varepsilon. 
		$$
		Hence, by Lemma \ref{prodcomplexnumbers} applied to the complex numbers $r_{j,1}(z_1),	\ldots,r_{j,d}(z_d)$ we have that 
		\[
			\vert 1- \prod_{i=1}^{d}r_{j,i}(z_i)\vert\leq (1+\tilde\varepsilon)^{d}-1
		\]
		for all $z_1\in K_{j,1}, \ldots, z_d\in K_{j,d}$. Therefore,
		\begin{equation*}
			\Vert \chi_{K_{j}}-r_{j}\Vert_{K_{j}}=\Vert 1- r_{j}\Vert_{K_{j}}=\Vert 1- \prod_{i=1}^{d}r_{j,i}\Vert_{K_{j}}=\sup_{\substack{ z_i\in K_{j,i}\\ i=1,\ldots, d}}\vert 1- \prod_{i=1}^{d}r_{j,i}(z_i)\vert\\\leq (1+\tilde\varepsilon)^{d}-1\leq \frac{\varepsilon}{2m\Vert g_j\Vert_{ K}}.
		\end{equation*}	
		Hence we have that
		\begin{equation}
			\label{eq:approxrationprod}
			\Vert \chi_{K_{j}} - r_j\Vert_{K}\leq \frac{\varepsilon}{2m\Vert g_j\Vert_{ K}}.
		\end{equation}
				
		Now, consider the function 
		\begin{equation*}
			g=\sum_{j=1}^m r_jg_j.
		\end{equation*}
		Clearly $g$ is a  finite sum of finite products of rational functions of one variable $z_i$ with poles in the set $L_i$ and 
		\begin{align*}
			\Vert h - g \Vert_{ K} & = \Vert h - \sum_{j=1}^m r_jg_j \Vert_{ K}\leq \varepsilon, 
		\end{align*}
		where the last inequality comes from the fact that for each $s\in\{1,\ldots,m\}$
		\begin{align}
			\label{inequalities}
			\Vert h - \sum_{j=1}^m r_jg_j\Vert_{K_s}
			  & \leq  \Vert h - r_sg_s\Vert_{K_s} + \Vert \sum_{\substack{ j=1                                                   \\j\ne s}}^m r_jg_j\Vert_{K_s}\nonumber\\
			  & \leq  \Vert h - g_s \Vert_{K_s} + \Vert g_s - r_sg_s\Vert_{K_s} +  \sum_{\substack{ j=1                          \\j\ne s}}^m \Vert r_jg_j\Vert_{K_{s}}\\
			  & \leq \frac{\varepsilon}{2} + \frac{\varepsilon}{2m\Vert g_s\Vert_{K}}\Vert g_s\Vert_{K_s} + \sum_{\substack{ j=1 \\j\ne s}}^m \frac{\varepsilon}{2m\Vert g_j\Vert_{K}}\Vert g_j\Vert_{K_s}
			\leq \varepsilon \hspace{.8cm}(\text{by }\eqref{hminusgj}\text{ and } \eqref{eq:approxrationprod}).\nonumber
		\end{align}
		Hence the proof for the case with $I$ finite is complete. 
					

		Consider now the case where $I$ is not (necessarily) finite. Since $K$ is a finite union of product sets, similarly to \cite[Lemma 4.3]{FalcoNestoridis2017}, we have that  the set of functions depending on a finite number of variables is dense in $A_D(K)$. Therefore, for a fixed function $h\in A_D(K)$ and a positive number $\varepsilon$ we can find a function $\tilde h\in A_D(K)$ that depends only on a finite number of variables $z_1, \ldots, z_d$ with $\Vert h-\tilde h\Vert\leq \varepsilon/2$. By the previous case, if we consider the function $\tilde h$ as a function defined on a set $\tilde K =\cup_{j=1}^{m}\prod_{i=1}^d K_{j,i}$ we have that there exists a function $g$ defined on $\tilde K$ with $g$ being a finite sum of finite products of rational functions of one variable $z_i$ with  poles in the set $L_i$ and $\Vert \tilde h - g\Vert_{\tilde K}\leq  \varepsilon /2$. By considering $g$ as a function defined on $K$ we have that $\Vert h-g\Vert_{ K} \leq \varepsilon$. Hence (a) implies (b). The reverse implication is obvious by considering the common denominator of the finite sum of finite products of rational functions depending on one variable. Hence (a) and (b) are equivalent.
					
		Statements (c), (d) and (e) are equivalent by Corollary \ref{cor:equivalence}. It is easy to see that  (a) implies (c) because the sets $\{K_{j}\}_{j=1}^{m}$ are pairwise disjoint. Indeed, since $A_{D}(K)=R(K)$ implies that $A_{D}(K_{j})=R(K_{j})$ for all $j=1,\ldots,m$ and since $\chi_{K_{j}}\in A_{D}(K)=R(K)$ the conclusion holds. 
					
		To finish the proof we show that (e) implies (a). Let us fix $h\in A_D(K)$ and a positive number $\varepsilon$. For each $j=1,\ldots,m$, we consider $h_{j}$ to be the restriction of $h$ to $K_{j}$. Clearly $h_{j}\in A_{D}(K_{j})$. By Corollary \ref{cor:equivalence} we can find a function $g_{j}$ with $\Vert h_{j}-g_{j}\Vert_{K_{j}}\leq \varepsilon/2$ and $g_{j}$ is a finite sum of finite products of rational functions of one variable $z_i$ with  poles in the set $L_i$. Since $K^{i}\cap L_{i}=\emptyset$ for all $i\in I$, the function $g_{j}$ is bounded on $K$. Using that $\chi_{K_{j}}\in R(K)$ we can find a rational function $r_{j}$ with singularities  off $K$ such that $\Vert g_{j}-r_{j}g_{j}\Vert_{K_{j}}\leq \varepsilon/(2m)$ and $\Vert r_{j}g_{j}\Vert_{K_{s}}\leq \varepsilon/(2m)$ for $j\ne s$. The function 
		$$
		q=\sum_{s=1}^{m}r_{s}g_{s}
		$$ is a rational function with singularities off $K$. The same argument as in equation \eqref{inequalities} yields $\Vert h-q\Vert_{K}\leq \varepsilon$. This completes the proof.
	\end{proof}
		
	\begin{rem}
				
		Senechkin showed that in the previous theorem the family $\{K_{j}\}_{j=1}^{m}$ needs to be finite. See \cite{MR0382722}  for the details. 				
				
	\end{rem}
		
	To conclude we show that if $K$ is rationally convex then the hypothesis in Theorem \ref{the:manyequivalences} (c), (d), (e)  that  $\chi_{K_{j}}\in R(K)$ is satisfied.
	Denote by $K^{\wedge_{r}}$ the rationally convex hull of a compact set $K\subset\mathbb C^{d}$, that is 
	the set of points $z\in\mathbb C^{d}$ such that 
	\[
		\vert g(z)\vert\leq \max_{x\in K}\vert g(x)\vert 
	\]
	for every rational function $g$ which is holomorphic on a neighbourhood of  $K$. We recall that $K$ is rationally convex if $K=K^{\wedge_{r}}.$ If $K$ is rationally convex then every holomorphic function on (a neighborhood of) $K$ can be uniformly approximated by rational functions $p/q$ where $q$ is zero-free on $K$, by the Oka-Weil approximation theorem, see  \cite[Page 95]{MR0410387}  for the details.  
			  
	It can be seen that the rationally convex hull of a product set is the product of the rationally convex hulls of the factors. It follows that if $K_i\subset \mathbb C^{n_i}, i=1,\ldots,d,$ then
	\[
		\Big(\prod_{i=1}^dK_i\Big)^{\wedge_{r}} = \prod_{i=1}^{d}K_i^{\wedge_{r}}.
	\]
	In particular, a finite product of planar compact sets is always rationally convex. 		
		
	The condition of being rationally convex is not necessary to obtain rational approximation. The sphere is not rationally convex, while every holomorphic function on the sphere can be uniformly approximated by polynomials. However, as we shall show now, rational convexity is also a sufficient condition to ensure that Theorem \ref{the:manyequivalences} holds.	
		
	\begin{cor}
		Let $I=\{1,\ldots,N\}$ be a finite set, $L_i, i\in I,$  subsets of $\mathbb C\cup\{\infty\})$ and $\{K_{j}\}_{j=1}^{m}$ a finite pairwise disjoint family of products of planar compacta satisfying the hypotheses of  Theorem \ref{the:manyequivalences}. 
		Then, if $K_{j,i}$ is a set of rational approximation for each $j=1,\ldots,m$ and each $i\in I$ and $K$ is rationally convex, we have that finite sums of finite products of rational functions of one variable $z_i$ with  poles in the set $L_{i}$ are uniformly dense in $A_{D}(K)$. In particular rational functions are dense in $A_D(K)$.
	\end{cor}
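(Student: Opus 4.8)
The plan is to deduce the statement from Theorem \ref{the:manyequivalences} by verifying its condition (e). Since $\{K_j\}_{j=1}^m$ is assumed to satisfy the hypotheses of that theorem, it is enough to check, for each $j=1,\ldots,m$ and each $i\in I$, that $R(K_{j,i})=A_D(K_{j,i})$ and that $\chi_{K_j}\in R(K)$. The first assertion is immediate: $K_{j,i}$ is a planar compact set, so $A(K_{j,i})=A_D(K_{j,i})$, and the hypothesis that $K_{j,i}$ is a set of rational approximation is precisely the equality $R(K_{j,i})=A(K_{j,i})$.

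It remains to show $\chi_{K_j}\in R(K)$, and this is the step where rational convexity of $K$ enters. First I would note that, because the compacta $K_1,\ldots,K_m$ are pairwise disjoint, they can be separated by disjoint open subsets of $\mathbb C^N$: there exist open sets $U,U'$ with $K_j\subset U$, $\bigcup_{l\ne j}K_l\subset U'$ and $U\cap U'=\emptyset$. The function equal to $1$ on $U$ and to $0$ on $U'$ is (locally constant, hence) holomorphic on the open neighbourhood $U\cup U'$ of $K$, and its restriction to $K$ is $\chi_{K_j}$. Since $K$ is rationally convex, the Oka--Weil approximation theorem in the form recalled just before this corollary (see \cite[Page 95]{MR0410387}) shows that this function, and therefore $\chi_{K_j}$, is a uniform limit on $K$ of rational functions $p/q$ with $q$ zero-free on $K$; that is, $\chi_{K_j}\in R(K)$.

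Having verified condition (e) of Theorem \ref{the:manyequivalences}, its conclusion yields condition (b), $R_L(K)=A_D(K)$, which is exactly the claim that finite sums of finite products of rational functions of one variable $z_i$ with poles in $L_i$ are uniformly dense in $A_D(K)$, as well as condition (a), $R(K)=A_D(K)$, giving density of rational functions in $A_D(K)$. I do not expect any real obstacle here: the only point requiring care is the reduction of $\chi_{K_j}\in R(K)$ to Oka--Weil, and this genuinely uses the hypothesis that the full union $K$ is rationally convex, since each individual product $K_j$ is automatically rationally convex (as recalled above) while a finite union of rationally convex sets need not be.
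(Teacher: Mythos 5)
Your proposal is correct and follows essentially the same route as the paper: verify condition (e) of Theorem \ref{the:manyequivalences} by noting that $R(K_{j,i})=A_D(K_{j,i})$ follows from the rational-approximation hypothesis (since $A_D=A$ for planar compacta), and that $\chi_{K_j}$ extends holomorphically to a neighbourhood of $K$ by disjointness of the $K_j$, whence the rational Oka--Weil theorem and the rational convexity of $K$ give $\chi_{K_j}\in R(K)$. The only difference is that you spell out the separation by disjoint open sets a little more explicitly than the paper does.
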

		
	\begin{proof}
		We shall prove that under the hypothesis of the corollary, condition (e) in Theorem \ref{the:manyequivalences} is satisfied. This would finish the proof since conditions (e) and (b) are equivalent in view of Theorem  \ref{the:manyequivalences}.
				
		$K_{j,i}$ is a set of rational approximation for each $j=1,\ldots,m$ and each $i\in I$ which means that $R(K_{j,i})=A_D(K_{j,i})$.  Also, the sets $\{K_{j}\}_{j=1}^{\infty}$ are pairwise disjoint and compact, hence the function $\chi_{K_{j}}$ can be naturally extended to be holomorphic on a neighborhood of the set $K$. Since $K$ is a rationally convex set, by the rational Oka-Weil theorem we can approximate $\chi_{K_{j}}$ by rational functions having no singularities on $K$. Therefore $\chi_{K_{j}}\in R(K)$. Thus, the requirements of condition (e) in Theorem  \ref{the:manyequivalences} are fulfilled and the result holds.
	\end{proof}
		
	\begin{rem}
		It is not in general true that if we have finitely many products with $A_{D}(K_{j})=P(K_{j})$ then $A_{D}(\cup_{j=1}^{m} K_{j})=P(\cup_{j=1}^{m}K_{j})$. Kallin \cite{Kallin} showed that there exist three congruent, pairwise disjoint, closed polydiscs $P_{1}$, $P_{2}$ and $P_{3}$ in $\mathbb C^{3}$ such that $P_{1}\cup P_{2}\cup P_{3}$ is not polynomially convex. Kallin's proof actually used polydiscs parallel to the coordinate axes.
	\end{rem}

	\section{Appendix: All inclusions are in general strict}
	\label{appendix}

	We recall that
	\[
		P(K)\ \stackon{\subset}{\scriptscriptstyle{(1)}}\  R(K) \ \stackon{\subset}{\scriptscriptstyle{(2)}}\   \overline{\mathcal O}(K)\ \stackon{\subset}{\scriptscriptstyle{(3)}}\   A_{D}(K)\ \stackon{\subset}{\scriptscriptstyle{(4)}}\   A(K) \ \stackon{\subset}{\scriptscriptstyle{(5)}}\   C(K).
	\]
	All these inclusions are in general strict. We shall focus on dimension  $d=3$. For $(1)$ we consider  $\mathbb T\times \{0\}\times\{0\}$ where $\mathbb T$ is the unit circle in $\mathbb C$. For $(2)$ we shall give an example in $\mathbb C^{3}$ (see Example \ref{zeronex}). For $(3)$ we may consider  $S\times \{0\}\times\{0\}$ where $S$ is the Swiss cheese in $\mathbb C$. Finally, for $(4)$ we consider $\overline{\mathbb D}\times\{0\}\times \{0\}$ where $\overline{\mathbb D}$ is the closed unit disc in $\mathbb C$ and argue as in Example \ref{counterexample1}. For $(5)$ we take the closed unit ball in $\mathbb C^{3}$.

	It only remains to give an example for $(2)$. For this we shall use  maximal ideal spaces, see \cite{MR0410387}. If $\mathcal F$ is a function algebra on a compact subset $K\subset \mathbb C^n,$ denote by $\mathcal M(\mathcal F)$ 
	the maximal ideal space of $\mathcal F$. Recall that, if $\mathcal F=C(K),$ then $\mathcal  M(\mathcal F) = K$; if $\mathcal F = P(K),$ then $\mathcal M(\mathcal F) $ is the polynomial hull $\widehat K$, and; if $\mathcal F = R(K),$ then $\mathcal M(\mathcal F)$ is the rational hull $K^{\wedge_{r}}.$

	Hence 
	\[
		\widehat K =\mathcal M(P(K))\supset   K^{\wedge_{r}}= \mathcal M(R(K)) \supset \mathcal M(\overline{\mathcal O}(K))\supset\mathcal M(A_{D}(K))\supset\mathcal M(A(K))\supset \mathcal M(C(K))=K.
	\]
	An immediate consequence is that if $C(K)=P(K),$ then we have equalities for all of above inclusions and hence $K$ is polynomially convex. Moreover, if $K$ is not rationally convex, then $C(K)\not = R(K).$

	\begin{ex}
		\label{zeronex}
		We consider $K$ to be a totally real compact manifold in $\mathbb C^3$ that is not rationally convex. For the existence of such a $K$ see \cite{AH}. In \cite[Corollary 3.4]{HW} it is shown that if $K$ is a closed totally real $C^1$-submanifold of an open set in $\mathbb C^3,$ then $\overline{\mathcal O}(K) = C(K).$  Since $K$ is not rationally convex we have that  $R(K)$  is a proper subset of $C(K)$. Therefore, 
		\[
			\overline{\mathcal O}(K)=C(K)\ne R(K).
		\]
	\end{ex}
	We note that the previous example leads to an example of an open set $V\subset \mathbb C^{3}$ such that 		\[
	R(V)\ne \mathcal O(V).
\]	
	 
\begin{rem}
	By considering the union of suitable translations of the above sets we can obtain a compact set $K$ in $\mathbb C^{3}$ such that inclusions $(1)$ to $(5)$ are strict simultaneously.
\end{rem}

	

\end{document}